\providecommand{\U}[1]{\protect\rule{.1in}{.1in}}
\newtheorem{theorem}{Theorem}
\theoremstyle{plain}
\newtheorem{corollary}{Corollary}
\newtheorem{lemma}{Lemma}
\newtheorem{proposition}{Proposition}
\newtheorem{remark}{Remark}
\numberwithin{equation}{section}
\begin{document}
\title[All Invariant Regions and Global Solutions]{All Invariant Regions and Global Solutions for $m$-component
Reaction-Diffusion Systems with a Tridiagonal Symmetric Toeplitz Matrix of
Diffusion Coefficients }
\author{Salem ABDELMALEK}
\address{S. ABDELMALEK: Department of Mathematics, College of Sciences, Yanbu, Taibah
University, Saudi Arabia.}
\email{sallllm@gmail.com}
\subjclass[2000]{Primary 35K45, 35K57}
\keywords{Reaction-Diffusion Systems, Invariant Regions, Global Existence}

\begin{abstract}
The purpose of this paper is the construction of invariant regions in which we
establish the global existence of solutions for $m$-component
reaction-diffusion systems with a tridiagonal symmetric toeplitz matrix of
diffusion coefficients and with nonhomogeneous boundary conditions. The
proposed technique is based on invariant regions and Lyapunov functional
methods. The nonlinear reaction term has been supposed to be of polynomial growth.

\end{abstract}
\maketitle

\section{\textbf{Introduction}}

In recent years, the global existence for solutions for nonlinear parabolic
systems of partial differential equations have received considerable
attention. On of the most promising works that can be found in the litarature
is that of Jeff Morgan \cite{Morgan1}, where all the components satisfy the
same boundary conditions (Neumann or Dirichlet), and the reaction terms are
polynomially bounded and satisfy $m$ inequalities. Hollis later completed the
work of Morgan and established global existence in the presence of mixed
boundary conditions subject to certain structure requirements of the system.
In 2007, Abdelmalek and Kouachi \cite{Abdelmalek2} show that solutions of the
$m$-component reaction--diffusion systems with a diagonal diffusion matrix
exist globally (for $m\geq2$).

The results obtained in this work represent the proof of the global existence
of solutions with Neumann, Dirichlet, nonhomogeneous Robin and a mixture of
Dirichlet with nonhomogeneous Robin conditions. The reaction terms are again
assumed to be of polynomial growth and satisfy a single inequality. The
diffusion matrix is a simple symmetric tridiagonal one.

All along the paper, we will use the following notations and assumptions: we
denote by $m\geq2$ the number of equations of the system (i.e. an
$m$-component system):%
\begin{equation}
\left.
\begin{array}
[c]{l}%
\dfrac{\partial u_{1}}{\partial t}-a\Delta u_{1}-b\Delta u_{2}=f_{1}\left(
U\right)  ,\text{ \ \ \ \ \ \ \ \ \ \ \ }\\
\dfrac{\partial u_{\ell}}{\partial t}-b\Delta u_{\ell-1}-a\Delta u_{\ell
}-b\Delta u_{\ell+1}=f_{\ell}\left(  U\right)  ;\text{ }\ell=2,...,m-1\text{,
\ \ \ }\\
\dfrac{\partial u_{m}}{\partial t}-b\Delta u_{m-1}-a\Delta u_{m}=f_{m}\left(
U\right)  \text{, \ \ \ \ \ \ \ \ \ \ }%
\end{array}
\text{\ in }\Omega\times\left\{  t>0\right\}  ,\right.  \label{1.1}%
\end{equation}
with the boundary conditions:%
\begin{equation}
\alpha u_{\ell}+\left(  1-\alpha\right)  \partial_{\eta}u_{\ell}=\beta_{\ell
},\text{\ }\ell=1,...,m,\text{\ \ \ \ on }\partial\Omega\times\left\{
t>0\right\}  , \label{1.2}%
\end{equation}
and the initial data:%
\begin{equation}
u_{\ell}(0,x)=u_{\ell}^{0}(x),\text{\ }\ell=1,...,m,\text{ \ \ \ on}\;\Omega,
\label{1.3}%
\end{equation}
where:

\begin{enumerate}
\item[(i)] for nonhomogeneous Robin boundary conditions, we use\newline%
$0<\alpha<1$ $,$\ $\beta_{\ell}\in%
%TCIMACRO{\U{211d} }%
%BeginExpansion
\mathbb{R}
%EndExpansion
,$ $\ell=1,...,m$, or

\item[(ii)] for homogeneous Neumann boundary conditions, we use\newline%
$\alpha=\beta_{\ell}=0,$ $\ell=1,...,m$, or

\item[(iii)] for homogeneous Dirichlet boundary conditions, we use\newline%
$1-\alpha=\beta_{\ell}=0,$ $\ell=1,...,m.$
\end{enumerate}

Here $\Omega$ is an open bounded domain of class $\mathbb{C}^{1}$ in $\mathbb{%
%TCIMACRO{\U{211d} }%
%BeginExpansion
\mathbb{R}
%EndExpansion
}^{N}$ with boundary $\partial\Omega$, $\dfrac{\partial}{\partial\eta}$
denotes the outward normal derivative on $\partial\Omega$, and\ $U=\left(
u_{\ell}\right)  _{\ell=1}^{m}$.$\;$The constants $a$ and $b$ are supposed to
be positive non null and satisfying the condition:
\begin{equation}
2b\cos\frac{\pi}{m+1}<a. \label{1.4}%
\end{equation}
The initial data are assumed to be\ in the regions:%
\begin{equation}
\Sigma_{L,Z}=\left\{  \left(  u_{1}^{0},...,u_{m}^{0}\right)  \in%
%TCIMACRO{\U{211d} }%
%BeginExpansion
\mathbb{R}
%EndExpansion
^{m}\text{ such that }\left\{
\begin{array}
[c]{l}%
\sum_{k=1}^{m}u_{k}^{0}\sin\frac{\left(  m+1-\ell\right)  k\pi}{m+1}\geq
0,\ell\in L\\
\sum_{k=1}^{m}u_{k}^{0}\sin\frac{zk\pi}{m+1}\leq0,z\in Z
\end{array}
\right.  \right\}  , \label{1.5}%
\end{equation}
with
\[
\left\{
\begin{array}
[c]{c}%
\sum_{k=1}^{m}\beta_{k}\sin\frac{\left(  m+1-\ell\right)  k\pi}{m+1}\geq
0,\ell\in L\\
\sum_{k=1}^{m}\beta_{k}\sin\frac{\left(  m+1-\ell\right)  k\pi}{m+1}\leq0,z\in
Z
\end{array}
\right.  ,
\]
where
\[
\left\{
\begin{array}
[c]{l}%
L\cap Z=\varnothing\\
L\cup Z=\left\{  1,2,...,m\right\}
\end{array}
\right.  .
\]
Hence, we can see that there are $2^{m}$ regions. The subsequent work is
similar for all of these regions as will be shown at the end of the paper. Let
us now examine the first region and then comment on the remaining cases. The
chosen region is the case where $L=\left\{  1,2,...,m\right\}  $ and
$Z=\varnothing$: we have%
\begin{equation}
\Sigma_{L,\varnothing}=\left\{  \left(  u_{1}^{0},...,u_{m}^{0}\right)  \in%
%TCIMACRO{\U{211d} }%
%BeginExpansion
\mathbb{R}
%EndExpansion
^{m}\text{ such that }\sum_{k=1}^{m}u_{k}^{0}\sin\frac{\left(  m+1-\ell
\right)  k\pi}{m+1}\geq0,\ell\in L,\right\}  \label{1.6}%
\end{equation}
with%
\[
\sum_{k=1}^{m}\beta_{k}\sin\frac{\left(  m+1-\ell\right)  k\pi}{m+1}\geq
0,\ell\in L.
\]
The aim is now to study the global existence of slutions for the
reaction-diffusion system in (\ref{1.1}) in this region. In order to achieve
this aim, we need to diagonalize the diffusion matrix, see formula
(\ref{5.1}). First, let us define the reaction diffusion functions as:%
\begin{equation}
F_{\ell}\left(  w_{1},w_{2},...,w_{m}\right)  =\sum_{k=1}^{m}f_{k}\left(
U\right)  \sin\frac{\left(  m+1-\ell\right)  k\pi}{m+1}, \label{1.8}%
\end{equation}
where the variable $w_{\ell}$ is given by%
\begin{equation}
w_{\ell}=\sum_{k=1}^{m}u_{k}\sin\frac{\left(  m+1-\ell\right)  k\pi}{m+1}.
\label{1.7}%
\end{equation}
The defined function must satisfy the following three conditions:

\begin{enumerate}
\item[(A1)] The functions $F_{\ell}$ are continuously differentiable on $%
%TCIMACRO{\U{211d} }%
%BeginExpansion
\mathbb{R}
%EndExpansion
_{+}^{m}$ for all $\ell=1,...,m$, satisfying $F_{\ell}(w_{1},...,w_{\ell
-1},0,w_{\ell+1},...,w_{m})\geq0$, for all $u_{\ell}$ $\geq0;$ $\ell=1,...,m$.
\end{enumerate}

\begin{enumerate}
\item[(A2)] The functions $F_{\ell}$ are of polynomial growth (see Hollis and
Morgan \cite{Hollis3}), which means that for all $\ell=1,...,m$ with integer
$N\geq1$:%
\begin{equation}
\left\vert F_{\ell}\left(  W\right)  \right\vert \leq C_{1}\left(
1+\overset{m}{\underset{\ell=1}{\sum}}w_{\ell}\right)  ^{N}\text{ on }\left(
0,+\infty\right)  ^{m}. \label{1.10}%
\end{equation}

\item[(A3)] The following inequality:%
\begin{equation}
\overset{m-1}{\underset{\ell=1}{\sum}}D_{\ell}F_{\ell}\left(  W\right)
+F_{m}\left(  W\right)  \leq C_{2}\left(  1+\overset{m}{\underset{\ell=1}%
{\sum}}w_{\ell}\right)  ; \label{1.11}%
\end{equation}
holds for all $w_{\ell}$ $\geq0;$ $\ell=1,...,m$ and all constants $D_{\ell
}\geq\overline{D_{\ell}};$ $\ell=1,...,m$ where $\overline{D_{\ell}};$
$\ell=1,...,m$ are positive constants sufficiently large. Note that $C_{1}$
and$C_{2}$ are positive and uniformly bounded functions defined on $%
%TCIMACRO{\U{211d} }%
%BeginExpansion
\mathbb{R}
%EndExpansion
_{+}^{m}$.
\end{enumerate}

\section{Preliminary Observations and Notations}

The usual norms in spaces $L^{p}(\Omega)$, $L^{\infty}(\Omega)$ and
$C(\overline{\Omega})$ are denoted respectively by:%
\begin{align}
\left\Vert u\right\Vert _{p}^{p}  &  =\frac{1}{\left\vert \Omega\right\vert
}\int_{\Omega}\left\vert u(x)\right\vert ^{p}dx;\nonumber\\
\left\Vert u\right\Vert _{\infty}  &  =ess\underset{x\in\Omega}{\sup
}\left\vert u(x)\right\vert ,\label{3.1}\\
\left\Vert u\right\Vert _{C(\overline{\Omega})}  &  =\underset{x\in
\overline{\Omega}}{\max}\left\vert u(x)\right\vert .\nonumber
\end{align}

It is well-known that to prove the global existence of solutions to a
reaction-diffusion system (see Henry \cite{Henry}), it suffices to derive a
uniform estimate of the associated reaction term on $\left[  0;T_{\max
}\right)  $ in the space $L^{p}(\Omega)$ for some $p>n/2.$ Our aim is to
construct polynomial Lyapunov functionals allowing us to obtain $L^{p}-$
bounds on the components, which leads to global existence. Since the reaction
terms are continuously differentiable on $%
%TCIMACRO{\U{211d} }%
%BeginExpansion
\mathbb{R}
%EndExpansion
_{+}^{m}$, it follows that for any initial data in $C(\overline{\Omega})$, it
is easy to check directly their Lipschitz continuity on bounded subsets of the
domain of a fractional power of the operator
\begin{equation}
O=-\left(
\begin{array}
[c]{cccc}%
\lambda_{1}\Delta & 0 & ... & 0\\
0 & \lambda_{2}\Delta & ... & 0\\
\vdots & \vdots & \ddots & \vdots\\
0 & 0 & ... & \lambda_{m}\Delta
\end{array}
\right)  . \label{3.2}%
\end{equation}
Under these assumptions, the following local existence result is well known
(see Friedman \cite{Friedman} and Pazy \cite{Pazy}).

\begin{remark}
Assumption\ (A1) contains smoothness and quasipositivity conditions that
guarantee local existence and nonnegativity of solutions as long as they
exist, via the maximum principle (see Smoller \cite{Smoller}). Assumption (A3)
is the usual polynomial growth condition necessary to obtain uniform bounds
from $p-$dependent $L^{P}$estimates. (see\ Abdelmalek and Kouachi
\cite{Abdelmalek2}, and Hollis and Morgan \cite{Hollis4}).
\end{remark}

\section{Some Properties of diffusion matrix}

\begin{lemma}
Considering the proposed reaction-diffusion system in (\ref{1.1}), the
resulting diffusion matrix can be given by:%
\begin{equation}
A=\left(
\begin{array}
[c]{cccccc}%
a & b & 0 & \cdots & 0 & 0\\
b & a & b & \ddots & 0 & 0\\
0 & b & a & \ddots & \vdots & \vdots\\
\vdots & \ddots & \ddots & \ddots & b & 0\\
0 & \cdots & 0 & b & a & b\\
0 & \cdots & 0 & 0 & b & a
\end{array}
\right)  . \label{2.1}%
\end{equation}
This matrix is said to be positive definite if the condition in (\ref{1.4}) is satisfied.
\end{lemma}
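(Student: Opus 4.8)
The plan is to diagonalize $A$ explicitly. Since $A$ is a real symmetric matrix, positive definiteness is equivalent to the strict positivity of all its eigenvalues, so the entire statement reduces to computing the spectrum of $A$ and locating its smallest element.

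First I would exhibit an eigenbasis. For each $k=1,\dots,m$, consider the vector $v^{(k)}$ whose $j$-th component is $\sin\frac{jk\pi}{m+1}$, $j=1,\dots,m$. Using the endpoint conventions $\sin 0=0$ and $\sin(k\pi)=0$ together with the trigonometric identity $\sin\frac{(j-1)k\pi}{m+1}+\sin\frac{(j+1)k\pi}{m+1}=2\cos\frac{k\pi}{m+1}\sin\frac{jk\pi}{m+1}$, a direct computation of $(Av^{(k)})_j=b\,v^{(k)}_{j-1}+a\,v^{(k)}_j+b\,v^{(k)}_{j+1}$ yields
\begin{equation}
A v^{(k)}=\lambda_k\,v^{(k)},\qquad \lambda_k=a+2b\cos\frac{k\pi}{m+1},\quad k=1,\dots,m.
\end{equation}
These are precisely the sine vectors entering the change of variables (1.7), which is exactly what makes the diagonalization (5.1) possible. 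The vectors $v^{(1)},\dots,v^{(m)}$ are mutually orthogonal (the standard orthogonality relation for the discrete sine family), hence linearly independent, so they form a complete eigenbasis and the $\lambda_k$ exhaust the spectrum of $A$.

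Next I would locate the smallest eigenvalue. Since $a,b>0$ and the map $k\mapsto\cos\frac{k\pi}{m+1}$ is strictly decreasing for $k=1,\dots,m$, the minimum is attained at $k=m$, giving $\lambda_{\min}=\lambda_m=a+2b\cos\frac{m\pi}{m+1}=a-2b\cos\frac{\pi}{m+1}$, where I used $\cos\frac{m\pi}{m+1}=-\cos\frac{\pi}{m+1}$. Consequently $A$ is positive definite if and only if $\lambda_m>0$, that is, if and only if $2b\cos\frac{\pi}{m+1}<a$, which is exactly condition (1.4).

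There is no serious obstacle here, as the result is a classical computation for tridiagonal Toeplitz matrices; the only points demanding a little care are the correct handling of the endpoint terms when verifying $Av^{(k)}=\lambda_k v^{(k)}$ (the vanishing of the $j=1$ and $j=m$ boundary contributions) and the identification of $k=m$ as the index realizing the minimal eigenvalue. The emphasis worth making is structural rather than computational: the eigenvectors are the sine modes underlying the whole paper, so this lemma simultaneously establishes positive definiteness and supplies the orthogonal transformation used later to decouple the diffusion in the system.
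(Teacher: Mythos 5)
Your proof is correct, but it takes a genuinely different route from the paper. The paper disposes of this lemma by citation: positive definiteness under condition (\ref{1.4}) is attributed to the reference of Johnson, Neumann and Tsatsomeros, with no computation given. You instead prove it from scratch by exhibiting the sine eigenvectors $v^{(k)}_j=\sin\frac{jk\pi}{m+1}$, verifying $Av^{(k)}=\lambda_k v^{(k)}$ with $\lambda_k=a+2b\cos\frac{k\pi}{m+1}$ via the product-to-sum identity, and reading off that the minimal eigenvalue $\lambda_m=a-2b\cos\frac{\pi}{m+1}$ is positive exactly under (\ref{1.4}); since $A$ is real symmetric this settles the claim, and in fact gives the stronger ``if and only if.'' It is worth noting that your argument essentially subsumes the paper's own Lemma \ref{Lemma0}, which the author proves separately and by a different method: there the spectrum is \emph{derived} by solving the second-order linear difference equation $bx_{k-1}+(a-\lambda)x_k+bx_{k+1}=0$ with the ansatz $x_k=\xi r^k$, using the boundary conditions $x_0=x_{m+1}=0$ to force $r_1/r_2$ to be an $(m+1)$-st root of unity. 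Your direct verification is more elementary (no case analysis on repeated roots, no complex exponentials) but presupposes that one already knows the sine vectors are the right guess; the paper's recurrence method discovers them. What your approach buys is a self-contained, two-line justification of positive definiteness that removes the dependence on the cited reference and simultaneously supplies the orthogonal transformation (\ref{1.7}) used to decouple the system, merging the content of two of the paper's lemmas into one computation.
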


\begin{proof}
The proof of this lemma can be found in \cite{Johnson}. Note that if the
matrix is positive definite, it follows that $\det A>0.$
\end{proof}

\begin{lemma}
\label{Lemma0}The eigenvalues $\left(  \lambda_{\ell}<\lambda_{\ell-1};\text{
}\ell=2,...,m\right)  $ of $A$ are positive and\ are given by
\begin{equation}
\lambda_{\ell}=a+2b\cos\left(  \frac{\ell\pi}{m+1}\right)  , \label{2.2}%
\end{equation}
with the corresponding eigenvectors being\ $v_{\ell}=\left(  \sin\frac{\ell
\pi}{m+1},\sin\frac{2\ell\pi}{m+1},...,\sin\frac{m\ell\pi}{m+1}\right)  ^{t},$
for $\ell=1,...,m$. Hence, we conclude that $A$ is diagonalizable.\newline In
the remainder of this work we require an ascending order of the eigenvalues.
In order to simplify the indices in the formulas to come we define%
\begin{equation}
\bar{\lambda}_{\ell}=\lambda_{m+1-\ell}=a+2b\cos\left(  \frac{(m+1-\ell)\pi
}{m+1}\right)  ;\text{ }\ell=2,...,m, \label{2.3}%
\end{equation}
thus $\left(  \bar{\lambda}_{\ell}<\bar{\lambda}_{\ell+1};\text{ }%
\ell=2,...,m\right)  .$
\end{lemma}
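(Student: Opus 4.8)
The plan is to verify directly that each candidate pair $\left(\lambda_\ell, v_\ell\right)$ is an eigenpair, i.e. that $A v_\ell = \lambda_\ell v_\ell$, and then to read off distinctness, diagonalizability, positivity, and the asserted ordering as immediate consequences. Setting $\theta_\ell = \dfrac{\ell\pi}{m+1}$, the $j$-th entry of the candidate eigenvector is $\left(v_\ell\right)_j = \sin\left(j\theta_\ell\right)$, and I would compute the $j$-th component of $A v_\ell$ directly from the tridiagonal structure in (\ref{2.1}).

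For an interior index $2 \le j \le m-1$ the product has the form
\begin{equation*}
\left(A v_\ell\right)_j = b\sin\left((j-1)\theta_\ell\right) + a\sin\left(j\theta_\ell\right) + b\sin\left((j+1)\theta_\ell\right),
\end{equation*}
and the key step is the sum-to-product identity $\sin\left((j-1)\theta_\ell\right) + \sin\left((j+1)\theta_\ell\right) = 2\cos\theta_\ell \sin\left(j\theta_\ell\right)$, which collapses this to $\left(a + 2b\cos\theta_\ell\right)\sin\left(j\theta_\ell\right) = \lambda_\ell \left(v_\ell\right)_j$. The two boundary rows $j=1$ and $j=m$ are the delicate part, since there the tridiagonal pattern is truncated; I expect this to be the main obstacle. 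The point is that the ``missing'' neighbours correspond to phantom terms $\sin\left(0\cdot\theta_\ell\right) = 0$ at $j=1$ and $\sin\left((m+1)\theta_\ell\right) = \sin\left(\ell\pi\right) = 0$ at $j=m$. Inserting these vanishing terms restores the interior formula at both ends, so the same identity applies and both boundary rows also yield $\lambda_\ell \left(v_\ell\right)_j$; it is precisely the choice $\theta_\ell = \ell\pi/(m+1)$ that forces $\sin\left((m+1)\theta_\ell\right)=0$ and makes this work.

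Once $A v_\ell = \lambda_\ell v_\ell$ is established, the remaining claims follow quickly. Since $t \mapsto \cos t$ is strictly decreasing on $(0,\pi)$ and the values $\theta_1 < \theta_2 < \cdots < \theta_m$ all lie in $(0,\pi)$, the eigenvalues $\lambda_\ell = a + 2b\cos\theta_\ell$ are strictly decreasing in $\ell$, hence pairwise distinct; $m$ distinct eigenvalues of an $m\times m$ matrix give a full set of independent eigenvectors, so $A$ is diagonalizable (consistent with its symmetry). For positivity, the same monotonicity shows the smallest eigenvalue is $\lambda_m = a + 2b\cos\left(\dfrac{m\pi}{m+1}\right) = a - 2b\cos\left(\dfrac{\pi}{m+1}\right)$, which is positive exactly under condition (\ref{1.4}); all other eigenvalues are then a fortiori positive. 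Finally, the relabeling $\bar\lambda_\ell = \lambda_{m+1-\ell}$ merely reverses the order into an ascending one, so the claimed inequality $\bar\lambda_\ell < \bar\lambda_{\ell+1}$ is immediate from $\lambda_\ell < \lambda_{\ell-1}$.
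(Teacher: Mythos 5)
Your proof is correct, but it takes a genuinely different route from the paper's. You verify the stated eigenpairs a posteriori: you compute $(Av_\ell)_j$ row by row, collapse the interior rows with the identity $\sin((j-1)\theta_\ell)+\sin((j+1)\theta_\ell)=2\cos\theta_\ell\,\sin(j\theta_\ell)$, and handle the two truncated boundary rows by the vanishing phantom terms $\sin(0\cdot\theta_\ell)=0$ and $\sin((m+1)\theta_\ell)=\sin(\ell\pi)=0$; completeness of the spectrum then follows by counting, since $m$ pairwise distinct eigenvalues exhaust the spectrum of an $m\times m$ matrix. The paper instead derives the eigenvalues from scratch: it rewrites $(A-\lambda I)X=0$ as the second-order difference equation $bx_{k-1}+(a-\lambda)x_k+bx_{k+1}=0$ with $x_0=x_{m+1}=0$, solves it via the characteristic quadratic $r^2+\frac{a-\lambda}{b}r+1=0$, rules out a repeated root, and deduces from the boundary conditions that $r_1/r_2$ is an $(m+1)$st root of unity, which forces $\lambda=a+2b\cos(\ell\pi/(m+1))$ and produces the sine eigenvectors as a byproduct. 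The paper's derivation is constructive (no need to guess the answer) and characterizes all solutions of the eigenvalue problem directly, while yours is shorter and purely elementary. A second real difference concerns positivity: the paper obtains it by citing positive definiteness of $A$ from the preceding lemma (the Johnson--Neumann--Tsatsomeros reference), whereas you prove it self-containedly by identifying the smallest eigenvalue $\lambda_m=a+2b\cos(m\pi/(m+1))=a-2b\cos(\pi/(m+1))$ and invoking condition (\ref{1.4}), which ties the conclusion more transparently to the hypothesis. One pedantic addition: to conclude that each $\lambda_\ell$ is genuinely an eigenvalue you should note $v_\ell\neq 0$, e.g.\ its first entry $\sin(\ell\pi/(m+1))>0$; this is immediate but worth a line.
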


\begin{proof}
Recall that the diffusion matrix is positive definite, hence it the
eigenvalues are necessarily positive. For\ an eigenpair $\left(
\lambda,X\right)  $, the components in $\left(  A-\lambda I\right)  X=0$ are%
\[
bx_{k-1}+\left(  a-\lambda\right)  x_{k}+bx_{k+1}=0,k=1,...,m,
\]
with $x_{0}=x_{m+1}=0,$ or equivalently,
\[
x_{k+2}+\left(  \frac{a-\lambda}{b}\right)  x_{k+1}+x_{k}=0,k=0,...,m-1.
\]

These are second-order homogeneous difference equations, and solving them is
similar to solving analogous differential equations. The technique is to seek
solutions of the form $x_{\ell}=\xi r^{k}$ for constants $\xi$ and $r.$ This
produces the quadratic equation
\[
r^{2}+\left(  \frac{a-\lambda}{b}\right)  r+1=0,
\]
with roots $r_{1}$ and $r_{2}.$ It can be argued that the general solution of
$x_{k+2}+\left(  \frac{a-\lambda}{b}\right)  x_{k+1}+x_{k}=0$ is%
\[
x_{\ell}=\left\{
\begin{array}
[c]{ll}%
\alpha r_{1}^{k}+\beta r^{k}, & \text{if }r_{1}\neq r_{2},\\
\alpha\rho^{k}+\beta k\rho^{k}, & \text{if \ }r_{1}=r_{2}=\rho,
\end{array}
\right.
\]
where $\alpha$ and $\beta$ are arbitrary constants.

For the eigenvalue problem at hand, $r_{1}$ and $r_{2}$ must be distinct
-otherwise $x_{k}=\alpha\rho^{k}+\beta k\rho^{k}$, and $x_{0}=x_{m+1}=0$
implies that each $x_{k}=0,$ which is impossible because $X$ is an
eigenvector. Hence, $x_{k}=\alpha r_{1}^{k}+\beta r^{k}$, and $x_{0}%
=x_{m+1}=0$ yields
\[
\left\{
\begin{array}
[c]{l}%
0=\alpha+\beta\\
0=\alpha r_{1}^{m+1}+\beta r_{2}^{m+1}%
\end{array}
\right.  \Rightarrow\left(  \frac{r_{1}}{r_{2}}\right)  ^{m+1}=\frac{-\beta
}{\alpha}=1\Rightarrow\frac{r_{1}}{r_{2}}=e^{\frac{2i\pi\ell}{m+1}},
\]
therefore, $r_{1}=r_{2}e^{\frac{2i\pi\ell}{m+1}}$ for some $1\leq\ell\leq m$.
This coupled with:
\[
r^{2}+\left(  \frac{a-\lambda}{b}\right)  r+1=\left(  r-r_{1}\right)  \left(
r-r_{2}\right)  \Rightarrow\left\{
\begin{array}
[c]{l}%
r_{1}r_{2}=1\\
r_{1}+r_{2}=-\left(  \frac{a-\lambda}{b}\right)
\end{array}
\right.  ,
\]
leads to $r_{1}=e^{\frac{i\pi\ell}{m+1}}$, $r_{2}=e^{-\frac{i\pi\ell}{m+1}}$,
and
\[
\lambda=a+b\left(  e^{\frac{i\pi\ell}{m+1}}+e^{-\frac{i\pi\ell}{m+1}}\right)
=a+2b\cos\left(  \frac{\ell\pi}{m+1}\right)  \text{.}%
\]
The eigenvalues of $A$ can, therefore, be given by
\[
\lambda_{\ell}=a+2b\cos\left(  \frac{\ell\pi}{m+1}\right)  ;
\]
for $\ell=1,...,m.$

Since these $\lambda_{\ell}$'s are all distinct ($\cos\theta$ is a strictly
decreasing function of $\theta$\ on $\left(  0,\pi\right)  ,$ and $b\neq0$),
$A$ is necessarily diagonalizable.

Finally, the $\ell^{th}$ component of any eigenvector associated with
$\lambda_{\ell}$ satisfies $x_{k}=\alpha r_{1}^{k}+\beta r_{2}^{k}$ with
$\alpha+\beta=0$, thus%
\[
x_{k}=\alpha\left(  e^{\frac{2i\pi k}{m+1}}-e^{-\frac{2i\pi k}{m+1}}\right)
=2i\alpha\sin\left(  \frac{k}{m+1}\pi\right)  .
\]
Setting $\alpha=\frac{1}{2i}$ yields a particular eignvector associated with
$\lambda_{\ell}$ given by
\[
v_{\ell}=\left(  \sin\left(  \frac{1\ell\pi}{m+1}\right)  ,\sin\left(
\frac{2\ell\pi}{m+1}\right)  ,...,\sin\left(  \frac{m\ell\pi}{m+1}\right)
\right)  ^{t}.
\]
Because the $\lambda_{\ell}$'s are distinct, $\left\{  v_{1},v_{2}%
,...,v_{m}\right\}  ,$ is a compleat linearly independent set, so $\left(
v_{1}\shortmid v_{2}\shortmid...\shortmid v_{m}\right)  $
\textbf{diagonalizes} $A$.

Now, let us prove that
\[
\lambda_{\ell}<\lambda_{\ell-1};\text{ }\ell=2,...,m.
\]
We have%
\[
\ell>\ell-1.
\]

Dividing by $\left(  m+1\right)  $ and multiplying by $\pi,$ we have%
\[
\frac{\ell\pi}{m+1}>\frac{\left(  \ell-1\right)  \pi}{m+1},
\]
The cosine function $\cos\theta$ is strictly decreasing in $\theta$\ on
$\left(  0,\pi\right)  ,$ thus we have%
\[
\cos\left(  \frac{\ell\pi}{m+1}\right)  <\cos\left(  \frac{\left(
\ell-1\right)  \pi}{m+1}\right)  .
\]

Finally, multiplying both sides of the inequality by $2b$ and adding $a$
yields%
\[
\lambda_{\ell}=a+2b\cos\left(  \frac{\ell\pi}{m+1}\right)  <a+2b\cos\left(
\frac{\left(  \ell-1\right)  \pi}{m+1}\right)  =\lambda_{\ell-1}.
\]

\end{proof}

\section{Main Results}

\begin{proposition}
\label{proposition1}The eigenvectors of the diffusion matrix associated with
the eigenvalues $\overline{\lambda}_{\ell}$ are defined as $\overline{v}%
_{\ell}=\left(  \overline{v}_{\ell1},\overline{v}_{\ell2},...,\overline
{v}_{\ell m}\right)  ^{t}$. Multiplying\ each of the $m$ equations in
(\ref{1.1}) by the corresponding element of the $\ell^{\text{th}}$ eigenvector
and adding the equations together yields:%
\begin{equation}
\dfrac{\partial w_{\ell}}{\partial t}-\bar{\lambda}_{\ell}\Delta w_{\ell
}=F_{\ell}\left(  w_{1},w_{2},...,w_{m}\right)  , \label{5.1}%
\end{equation}
and%
\begin{equation}
\alpha w_{\ell}+\left(  1-\alpha\right)  \partial_{\eta}w_{\ell}=\rho_{\ell
}\text{\ \ \ \ \ on }\partial\Omega\times\left\{  t>0\right\}  . \label{5.2}%
\end{equation}
The reaction term $F_{\ell}$, the components $w_{\ell}$, and the ascending
order eigenvalues $\bar{\lambda}_{\ell}$ have been defined perviously in this paper.
\end{proposition}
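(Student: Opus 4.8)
The plan is to recognize (\ref{5.1})--(\ref{5.2}) as the image of the original problem (\ref{1.1})--(\ref{1.2}) under the constant linear change of variables $W=PU$, where $P$ is the matrix whose $\ell$th row is the eigenvector $\bar v_\ell=v_{m+1-\ell}$ supplied by Lemma \ref{Lemma0}; equivalently, $w_\ell=\bar v_\ell\cdot U$ is precisely the quantity defined in (\ref{1.7}). First I would rewrite the three families of equations in (\ref{1.1}) as the single vector identity $\partial_t U-A\Delta U=f(U)$, where $A$ is the tridiagonal symmetric Toeplitz matrix (\ref{2.1}) and $f(U)=(f_1(U),\ldots,f_m(U))^t$; the block structure of (\ref{1.1}) is exactly the row-by-row expansion of $A\Delta U$.

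Next I would left-multiply this vector identity by the row vector $\bar v_\ell^{\,t}$ and treat the three resulting terms separately. Because $\bar v_\ell$ has constant entries, the time derivative commutes with the inner product, giving $\bar v_\ell^{\,t}\partial_t U=\partial_t(\bar v_\ell\cdot U)=\partial_t w_\ell$. For the diffusion term the symmetry of $A$ is the crucial ingredient: since $A=A^t$, the right eigenvector $\bar v_\ell$ is simultaneously a left eigenvector, so $\bar v_\ell^{\,t}A=\bar\lambda_\ell\,\bar v_\ell^{\,t}$; combined with the fact that $\Delta$ acts componentwise and hence commutes with a constant linear combination, this yields $\bar v_\ell^{\,t}A\Delta U=\bar\lambda_\ell\,\Delta(\bar v_\ell\cdot U)=\bar\lambda_\ell\,\Delta w_\ell$. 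Finally $\bar v_\ell^{\,t}f(U)=\sum_{k=1}^m f_k(U)\sin\frac{(m+1-\ell)k\pi}{m+1}$ is, by the very definition (\ref{1.8}), equal to $F_\ell$. Collecting the three pieces produces (\ref{5.1}).

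For the boundary condition I would apply the same constant linear combination to (\ref{1.2}): multiplying the $k$th relation by $\bar v_{\ell k}=\sin\frac{(m+1-\ell)k\pi}{m+1}$ and summing over $k$, the linearity of the trace and of the outward normal derivative $\partial_\eta$ gives $\alpha\sum_k\bar v_{\ell k}u_k+(1-\alpha)\partial_\eta\bigl(\sum_k\bar v_{\ell k}u_k\bigr)=\alpha w_\ell+(1-\alpha)\partial_\eta w_\ell$, while the right-hand side defines $\rho_\ell=\sum_{k=1}^m\beta_k\sin\frac{(m+1-\ell)k\pi}{m+1}$. This is exactly (\ref{5.2}).

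The argument is essentially bookkeeping once diagonalization is in hand, so I do not anticipate a hard analytic obstacle; the one point requiring care is the internal consistency of writing $F_\ell$ as a function of $(w_1,\ldots,w_m)$ rather than of $U$. This is legitimate because Lemma \ref{Lemma0} guarantees that the eigenvectors form a linearly independent set, so $P$ is invertible and $U=P^{-1}W$; substituting this into each $f_k$ makes every $F_\ell$ a genuine function of $W$, which is precisely what the statement, and the later structural conditions (A1)--(A3), tacitly require.
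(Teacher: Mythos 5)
Your proof is correct and takes essentially the same route as the paper: the paper offers no separate proof of Proposition \ref{proposition1} (the statement itself \emph{is} the diagonalization computation), and your matrix formulation — writing (\ref{1.1}) as $\partial_{t}U-A\Delta U=f(U)$, left-multiplying by $\bar{v}_{\ell}^{\,t}$, and using $A=A^{t}$ to get $\bar{v}_{\ell}^{\,t}A=\bar{\lambda}_{\ell}\bar{v}_{\ell}^{\,t}$ — is exactly that computation made precise. Your closing remark that the invertibility of the eigenvector matrix (via Lemma \ref{Lemma0}) is what legitimizes viewing $F_{\ell}$ as a function of $(w_{1},\ldots,w_{m})$ is a point the paper leaves tacit, and it is a worthwhile addition.
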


Note that condition (\ref{1.4}) guarantees the parabolicity of the proposed
reaction-diffusion system in (\ref{1.1})-(\ref{1.3}), which implies that the
system described by (\ref{5.1})-(\ref{5.2}) is equivalent to it in the region:%

\[
\Sigma_{L,\varnothing}=\left\{  \left(  u_{1}^{0},...,u_{m}^{0}\right)  \in%
%TCIMACRO{\U{211d} }%
%BeginExpansion
\mathbb{R}
%EndExpansion
^{m}\text{ such that }\left\{  w_{\ell}^{0}=\sum_{k=1}^{m}u_{k}^{0}\sin
\frac{\left(  m+1-\ell\right)  k\pi}{m+1}\geq0,\ell\in L\right.  ,\right.
\]
with%
\[
\left\{  \rho_{\ell}^{0}=\sum_{k=1}^{m}\beta_{k}\sin\frac{\left(
m+1-\ell\right)  k\pi}{m+1}\geq0,\ell\in L\right.  .
\]
This implies that the components $w_{\ell}$ are necessarily positive.

\begin{proposition}
The system (\ref{5.1})-(\ref{5.2}) admits a unique classical solution
$(w_{1};w_{2};..,w_{m})$ on$\ (0,T_{\max})\times\Omega$.%
\begin{equation}
\text{If }T_{\max}<\infty\text{ then }\underset{t\nearrow T_{\max}}{\lim
}\overset{m}{\underset{\ell=1}{\sum}}\left\Vert w_{\ell}\left(  t,.\right)
\right\Vert _{\infty}=\infty\text{,} \label{3.3}%
\end{equation}
\newline where $T_{\max}$ $\left(  \left\Vert w_{1}^{0}\right\Vert _{\infty
},\left\Vert w_{2}^{0}\right\Vert _{\infty},...,\left\Vert w_{m}%
^{0}\right\Vert _{\infty}\right)  $ denotes the eventual blow-up time.
\end{proposition}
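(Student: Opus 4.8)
The plan is to recast the diagonalized system (\ref{5.1})--(\ref{5.2}) as an abstract semilinear parabolic evolution equation and to invoke the standard local existence theory of Henry \cite{Henry} and Pazy \cite{Pazy}. Writing $W=(w_{1},\dots,w_{m})^{t}$, the system takes the form $W_{t}+OW=F(W)$ with $O$ the diagonal operator of (\ref{3.2}) (now carrying the ascending eigenvalues $\bar{\lambda}_{\ell}$ on the diagonal). Since the Robin data $\rho_{\ell}$ are time-independent, I would first homogenize the boundary conditions: choose a fixed smooth lift $\Psi=(\psi_{1},\dots,\psi_{m})$ with $\alpha\psi_{\ell}+(1-\alpha)\partial_{\eta}\psi_{\ell}=\rho_{\ell}$ on $\partial\Omega$, and set $\widetilde{W}=W-\Psi$. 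Then $\widetilde{W}$ solves a system of the same type with homogeneous boundary conditions and modified reaction term $\widetilde{F}_{\ell}(\widetilde{W})=F_{\ell}(\widetilde{W}+\Psi)+\bar{\lambda}_{\ell}\Delta\psi_{\ell}$, which inherits the continuous differentiability of (A1) and the polynomial growth of (A2).

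Next I would verify the hypotheses of the abstract theory. Each diagonal block $-\bar{\lambda}_{\ell}\Delta$ with the homogeneous Robin (or Neumann, or Dirichlet) condition is a positive multiple of a self-adjoint sectorial operator, so $-O$ generates an analytic semigroup $e^{-tO}$ on $X=L^{p}(\Omega)^{m}$ and admits fractional powers $O^{\gamma}$. I would fix $p>N/2$ and $\gamma\in(0,1)$ large enough that the fractional power space $X^{\gamma}=D(O^{\gamma})$ embeds continuously into $C(\overline{\Omega})^{m}$, which is possible by the Sobolev embedding for domains of fractional powers of second-order elliptic operators. On bounded subsets of $X^{\gamma}$ the map $\widetilde{F}$ is then locally Lipschitz from $X^{\gamma}$ into $X$, as already noted in Section~2. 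With these ingredients, Henry's local existence theorem (\cite{Henry}, Thm.~3.3.3) yields a unique maximal mild solution $\widetilde{W}\in C([0,T_{\max});X^{\gamma})$, and parabolic regularity (\cite{Friedman,Pazy}) upgrades it to a classical solution on $(0,T_{\max})\times\Omega$; undoing the lift returns the asserted unique classical solution of (\ref{5.1})--(\ref{5.2}).

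It remains to establish the blow-up alternative (\ref{3.3}), which I expect to be the main obstacle. The abstract theorem already provides the dichotomy that either $T_{\max}=\infty$ or $\limsup_{t\nearrow T_{\max}}\|\widetilde{W}(t)\|_{X^{\gamma}}=\infty$; the content of (\ref{3.3}) is that this blow-up is already detected in the weaker $L^{\infty}$ norm, and bridging the gap between the strong $X^{\gamma}$ norm and the weak $L^{\infty}$ norm is the delicate point. I would argue by contradiction: assuming $T_{\max}<\infty$ together with $\sup_{[0,T_{\max})}\sum_{\ell}\|w_{\ell}(t,\cdot)\|_{\infty}<\infty$, the polynomial growth (A2) forces $\widetilde{F}(\widetilde{W}(t))$ to remain bounded in $L^{\infty}(\Omega)^{m}\subset X$ on $[0,T_{\max})$. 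Feeding this into the variation-of-constants formula $\widetilde{W}(t)=e^{-tO}\widetilde{W}(0)+\int_{0}^{t}e^{-(t-s)O}\widetilde{F}(\widetilde{W}(s))\,ds$ and using the analytic smoothing estimate $\|O^{\gamma}e^{-sO}\|\leq Cs^{-\gamma}$ on the bounded interval $[0,T_{\max})$, together with the integrability of $s^{-\gamma}$ near $0$ for $\gamma<1$, yields a uniform bound on $\|\widetilde{W}(t)\|_{X^{\gamma}}$ up to $T_{\max}$. This contradicts the $X^{\gamma}$-blow-up stated above, so $\sum_{\ell}\|w_{\ell}(t,\cdot)\|_{\infty}$ must diverge as $t\nearrow T_{\max}$, which is precisely (\ref{3.3}). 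The only genuinely technical ingredients are the embedding $X^{\gamma}\hookrightarrow C(\overline{\Omega})^{m}$, which fixes the admissible range of $\gamma$ and $p$, and the singular-integral (Gronwall-type) estimate that converts the $L^{\infty}$ bound into an $X^{\gamma}$ bound.
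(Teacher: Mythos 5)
Your proposal is sound, and it is essentially the argument the paper itself relies on: the paper gives no actual proof of this proposition, but rather treats it as the ``well known'' local existence result announced in Section~2, where the reaction terms are observed to be locally Lipschitz on bounded subsets of the domain of a fractional power of the diagonal operator $O$ of (\ref{3.2}), with citations to Friedman, Pazy and Henry. Your write-up (lifting the time-independent Robin data, analytic semigroup on $L^{p}(\Omega)^{m}$, the embedding $X^{\gamma}\hookrightarrow C(\overline{\Omega})^{m}$ for $p>N/2$ and suitable $\gamma<1$, Henry's fixed-point theorem, then parabolic regularity) simply carries out in detail the standard program that the paper invokes by reference, so the approach matches.

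One refinement is still needed at the very end. Your contradiction argument establishes that if $T_{\max}<\infty$ then $\sup_{t\in[0,T_{\max})}\sum_{\ell}\left\Vert w_{\ell}(t,\cdot)\right\Vert _{\infty}=\infty$, i.e.\ the \emph{limsup} is infinite, whereas (\ref{3.3}) asserts the full limit. To upgrade, use the fact that the local existence time produced by the fixed-point argument depends only on a bound for the initial data (via the same smoothing estimate you already employ): if $\liminf_{t\nearrow T_{\max}}\sum_{\ell}\left\Vert w_{\ell}(t,\cdot)\right\Vert _{\infty}<\infty$, choose $t_{n}\nearrow T_{\max}$ along which the data are bounded by some $R$, and restart the evolution at $t_{n}$; the solution then persists for a uniform time $\tau(R)>0$ independent of $n$, so for $n$ large enough that $T_{\max}-t_{n}<\tau(R)$ the solution extends past $T_{\max}$, contradicting maximality. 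This restart argument is standard and does not alter the structure of your proof, but without it you have only proved the weaker, limsup form of the blow-up alternative.
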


The main result of the paper reads as follows.

\begin{theorem}
\label{theorem1}Suppose that the functions $F_{\ell};$ $\ell=1,...,m$ are of
polynomial growth and satisfy condition (\ref{1.11}) for some positive
constants $D_{\ell};$ $\ell=1,...,m$ sufficiently large. Let $\left(
w_{1}\left(  t,.\right)  ,w_{2}\left(  t,.\right)  ,...,w_{m}\left(
t,.\right)  \right)  $ be a solution of (\ref{5.1})-(\ref{5.2}) and%
\begin{equation}
L(t)=\int_{\Omega}H_{p_{m}}\left(  w_{1}\left(  t,x\right)  ,w_{2}\left(
t,x\right)  ,...,w_{m}\left(  t,x\right)  \right)  dx, \label{5.3}%
\end{equation}
where%
\[
H_{p_{m}}\left(  w_{1},...,w_{m}\right)  =\overset{p_{m}}{\underset{p_{m-1}%
=0}{\sum}}...\overset{p_{2}}{\underset{p_{1}=0}{\sum}}C_{p_{m}}^{p_{m-1}%
}...C_{p_{2}}^{p_{1}}\theta_{1}^{p_{1}^{2}}...\theta_{\left(  m-1\right)
}^{p_{\left(  m-1\right)  }^{2}}w_{1}^{p_{1}}w_{2}^{p_{2}-p_{1}}%
...w_{m}^{p_{m}-p_{m-1}},
\]
with $p_{m}$ a positive integer and $C_{p_{\kappa}}^{p_{\ell}}=\frac
{p_{\kappa}!}{p_{\ell}!\left(  p_{\kappa}-p_{\ell}\right)  !}$.\newline Also
suppose that the following condition is satisfied%
\begin{equation}
K_{l}^{l}>0;\text{ }l=2,...,m\text{,} \label{1.12}%
\end{equation}
where%
\[
K_{l}^{r}=K_{r-1}^{r-1}\times K_{l}^{r-1}-\left[  H_{l}^{r-1}\right]
^{2};r=3,...,l\text{,}%
\]%
\[
H_{l}^{r}=\underset{1\leq\ell,\kappa\leq l}{\det}\left(  \left(
a_{\ell,\kappa}\right)  _{\substack{\ell\neq l,...r+1\\\kappa\neq
l-1,..r}}\right)  \times\overset{k=r-2}{\underset{k=1}{\Pi}}\left(
\det\left[  k\right]  \right)  ^{2^{\left(  r-k-2\right)  }}%
r=3,...,l-1\text{,}%
\]%
\[
K_{l}^{2}=\underset{\text{positive value}}{\underbrace{\bar{\lambda}_{1}%
\bar{\lambda}_{l}\overset{l-1}{\underset{k=1}{\Pi}}\theta_{k}^{2\left(
p_{k}+1\right)  ^{2}}\times\overset{m-1}{\underset{k=l}{\Pi}}\theta
_{k}^{2\left(  p_{k}+2\right)  ^{2}}}}\times\left[  \overset{l-1}%
{\underset{k=1}{\Pi}}\theta_{k}^{2}-A_{1l}^{2}\right]  ,
\]
and%
\[
H_{l}^{2}=\underset{\text{positive value}}{\underbrace{\bar{\lambda}_{1}%
\sqrt{\bar{\lambda}_{2}\bar{\lambda}_{l}}\theta_{1}^{2\left(  p_{1}+1\right)
^{2}}\overset{l-1}{\underset{k=2}{\Pi}}\theta_{k}^{\left(  p_{k}+2\right)
^{2}+\left(  p_{k}+1\right)  ^{2}}\times\overset{m-1}{\underset{k=l}{\Pi}%
}\theta_{k}^{2\left(  p_{k}+2\right)  ^{2}}}}\times\left[  \theta_{1}%
^{2}A_{2l}-A_{12}A_{1l}\right]  .
\]
$\underset{1\leq\ell,\kappa\leq l}{\det}\left(  \left(  a_{\ell,\kappa
}\right)  _{\substack{\ell\neq l,...r+1\\\kappa\neq l-1,..r}}\right)  $ is
denoted determinant of $r$ square symmetric matrix obtained from $\left(
a_{\ell,\kappa}\right)  _{1\leq\ell,\kappa\leq m}$ by removing the $\left(
r+1\right)  ^{th},\left(  r+2\right)  ^{th},...,l^{th}$ rows and the
$r^{th},\left(  r+1\right)  ^{th},...,\left(  l-1\right)  ^{th}$ columns.
where $\det\left[  1\right]  ,...,\det\left[  m\right]  $ \ are the minors of
the matrix $\left(  a_{\ell,\kappa}\right)  _{1\leq\ell,\kappa\leq m}.$ The
elements of the matrix are:%
\begin{equation}
a_{\ell\kappa}=\frac{\bar{\lambda}_{\ell}+\bar{\lambda}_{\kappa}}{2}\theta
_{1}^{p_{1}^{2}}...\theta_{\left(  \ell-1\right)  }^{p_{\left(  \ell-1\right)
}^{2}}\theta_{\ell}^{\left(  p_{\ell}+1\right)  ^{2}}...\theta_{\kappa
-1}^{\left(  p_{\left(  \kappa-1\right)  }+1\right)  ^{2}}\theta_{\kappa
}^{\left(  p_{\kappa}+2\right)  ^{2}}...\theta_{\left(  m-1\right)  }^{\left(
p_{\left(  m-1\right)  }+2\right)  ^{2}}. \label{1.13}%
\end{equation}
where $\bar{\lambda}_{\ell}$ in (\ref{2.2})- (\ref{2.3}). Note that
$A_{\ell\kappa}=\dfrac{\bar{\lambda}_{\ell}+\bar{\lambda}_{\kappa}}%
{2\sqrt{\bar{\lambda}_{\ell}\bar{\lambda}_{\kappa}}}$ for all $\ell
,\kappa=1,...,m$. and $\theta_{\ell};\ell=1,...,\left(  m-1\right)  $ are
positive constants.\newline It follows from these conditions that the
functional $L$ is uniformly bounded on the interval $\left[  0,T^{\ast
}\right]  ,T^{\ast}<T_{\max}$.
\end{theorem}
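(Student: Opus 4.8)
The plan is to establish a differential inequality of the form $L'(t)\le C(1+L(t))$ on $[0,T^{\ast}]$ and then invoke Gronwall's lemma, since this immediately yields the claimed uniform bound on any compact subinterval of $[0,T_{\max})$. First I would differentiate $L(t)$ under the integral sign and substitute the evolution equations $(\ref{5.1})$, writing $\partial_{t}w_{\ell}=\bar{\lambda}_{\ell}\Delta w_{\ell}+F_{\ell}$. This splits $L'(t)$ into a diffusion contribution $I=\int_{\Omega}\sum_{\ell}\bar{\lambda}_{\ell}\,\partial_{w_{\ell}}H_{p_{m}}\,\Delta w_{\ell}\,dx$ and a reaction contribution $J=\int_{\Omega}\sum_{\ell}\partial_{w_{\ell}}H_{p_{m}}\,F_{\ell}\,dx$. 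Applying Green's formula to $I$ converts it into a boundary integral plus the interior term $-\int_{\Omega}\sum_{\ell,\kappa}\bar{\lambda}_{\ell}\,\partial^{2}_{w_{\ell}w_{\kappa}}H_{p_{m}}\,\nabla w_{\ell}\cdot\nabla w_{\kappa}\,dx$. The boundary integral is handled case by case using $(\ref{5.2})$: it vanishes in the homogeneous Neumann and Dirichlet cases (in the latter because the first derivatives of $H_{p_{m}}$ vanish where all $w_{\ell}=0$), and in the nonhomogeneous Robin case it reduces, after substituting $\partial_{\eta}w_{\ell}=(\rho_{\ell}-\alpha w_{\ell})/(1-\alpha)$, to a nonpositive term plus a uniformly bounded remainder.

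The heart of the argument is to show the interior quadratic form in the gradients is nonnegative, so that $-I$ contributes a good (nonpositive) term to $L'(t)$. After symmetrizing the matrix $\bigl(\bar{\lambda}_{\ell}\,\partial^{2}_{w_{\ell}w_{\kappa}}H_{p_{m}}\bigr)$ and factoring out the common positive monomials in the $w_{k}$ and the $\theta_{k}$, I would identify its $(\ell,\kappa)$ entry with $a_{\ell\kappa}$ from $(\ref{1.13})$; the nested binomial structure of $H_{p_{m}}$ together with the weights $\theta_{k}^{p_{k}^{2}}$ is designed precisely so that the Hessian collapses to this symmetric pattern. Positive definiteness of $(a_{\ell\kappa})_{1\le\ell,\kappa\le m}$ is then equivalent to positivity of all its leading principal minors, and the recursion $K_{l}^{r}=K_{r-1}^{r-1}K_{l}^{r-1}-[H_{l}^{r-1}]^{2}$ with base data $K_{l}^{2}$ and $H_{l}^{2}$ is exactly a symmetric Gaussian-elimination (Schur-complement) scheme for these minors. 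Hence hypothesis $(\ref{1.12})$, namely $K_{l}^{l}>0$ for $l=2,\dots,m$, guarantees the form is positive definite and $-I\le(\text{bounded boundary terms})$.

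\textbf{This verification is the main obstacle.} One must confirm that the Hessian of $H_{p_{m}}$ genuinely factors as the monomial prefactors times $(a_{\ell\kappa})$, and that the recursively defined quantities $K_{l}^{l}$ coincide with the principal minors of $(a_{\ell\kappa})$ up to the indicated positive products of the $\det[k]$ and the $\theta_{k}$-powers. This is a lengthy but mechanical determinant computation that exploits the freedom in the positive constants $\theta_{\ell}$ to keep the leading minors positive, and it is where the combinatorial bookkeeping of the superscripts $(p_{k}+1)^{2}$ and $(p_{k}+2)^{2}$ in $a_{\ell\kappa}$ must be tracked carefully.

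Finally, for the reaction term $J$, I would use that each $\partial_{w_{\ell}}H_{p_{m}}$ is a polynomial whose top-degree part is a fixed multiple of the corresponding leading monomial, so that the highest-order contribution of $\sum_{\ell}\partial_{w_{\ell}}H_{p_{m}}\,F_{\ell}$ is proportional to $\sum_{\ell=1}^{m-1}D_{\ell}F_{\ell}+F_{m}$, with the $D_{\ell}$ determined by the coefficients of the functional and made as large as needed by the choice of the $\theta_{\ell}$ and $p_{\ell}$, matching the ``sufficiently large'' requirement in (A3). Invoking assumption (A3), inequality $(\ref{1.11})$ bounds this leading part by $C_{2}(1+\sum_{\ell}w_{\ell})$ times a polynomial of strictly lower degree, while the polynomial growth $(\ref{1.10})$ controls the remaining terms; integrating over $\Omega$ and applying H\"older's inequality together with the nonnegativity of the $w_{\ell}$ yields $J\le C(1+L(t))$. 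Combining this with $-I\le(\text{bounded boundary terms})$ produces the differential inequality $L'(t)\le C(1+L(t))$, and Gronwall's lemma then gives the uniform bound of $L$ on $[0,T^{\ast}]$ with $T^{\ast}<T_{\max}$.
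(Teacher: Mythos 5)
Your proposal follows essentially the same route as the paper's proof: the same split $L'(t)=I+J$, Green's formula with the same case-by-case boundary analysis, positive definiteness of the symmetrized Hessian matrix $\left(a_{\ell\kappa}\right)$ established through its leading principal minors via the recursion for $K_{l}^{l}$, and assumption (A3) plus H\"older's inequality to close the differential inequality (the paper integrates $p_{m}Z'\leq C_{6}Z+C_{8}$ with $Z=L^{1/p_{m}}$, which is Gronwall in substance). The ``main obstacle'' you flag --- identifying the Hessian factorization and recognizing the $K_{l}^{r}$ recursion as computing the minors $\det\left[l\right]$ up to positive factors --- is precisely what the paper handles via its Lemmas \ref{Lemma1}, \ref{Lemma2} and \ref{Lemma3}, the last being the Sylvester-identity result quoted from Abdelmalek and Kouachi \cite{Abdelmalek2}, so your plan and the paper's proof coincide.
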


\begin{corollary}
\label{corollary1}Under the assumptions of theorem \ref{theorem1}, all
solutions of (\ref{5.1})-(\ref{5.2}) with positive initial data in $L^{\infty
}\left(  \Omega\right)  $ are in $L^{\infty}\left(  0,T^{\ast};L^{p}\left(
\Omega\right)  \right)  $ for some $p\geq1.$
\end{corollary}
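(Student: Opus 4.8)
The plan is to \emph{deduce} the corollary directly from Theorem \ref{theorem1}, whose conclusion is precisely that the Lyapunov functional $L(t)=\int_{\Omega}H_{p_{m}}(w_{1},\dots,w_{m})\,dx$ is uniformly bounded on $[0,T^{\ast}]$. The entire content of the corollary is the translation of this single scalar bound into an $L^{p}$ bound on each component $w_{\ell}$. First I would recall that, because the initial data lie in the invariant region $\Sigma_{L,\varnothing}$ and the boundary data satisfy the compatible sign conditions, Proposition \ref{proposition1} and the remark following it guarantee that every component $w_{\ell}(t,x)$ is nonnegative for $t\in[0,T_{\max})$. This positivity is the hinge of the whole argument.

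Next I would exploit the explicit algebraic form of $H_{p_{m}}$. It is a homogeneous polynomial of degree $p_{m}$ in $(w_{1},\dots,w_{m})$: each monomial $w_{1}^{p_{1}}w_{2}^{p_{2}-p_{1}}\cdots w_{m}^{p_{m}-p_{m-1}}$ carries total degree $p_{1}+(p_{2}-p_{1})+\dots+(p_{m}-p_{m-1})=p_{m}$, the summation ranging over $0\le p_{1}\le p_{2}\le\dots\le p_{m-1}\le p_{m}$. Every coefficient $C_{p_{m}}^{p_{m-1}}\cdots C_{p_{2}}^{p_{1}}\theta_{1}^{p_{1}^{2}}\cdots\theta_{m-1}^{p_{m-1}^{2}}$ is a product of binomial coefficients and of positive powers of the positive constants $\theta_{\ell}$, hence is strictly positive. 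Combined with $w_{\ell}\ge0$, this shows that every term of the defining sum is nonnegative, so that $H_{p_{m}}$ dominates any single one of its monomials.

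The key step is then to isolate, for each $\ell$, the pure power $w_{\ell}^{p_{m}}$ inside the sum. A short bookkeeping of the exponents shows that $w_{\ell}^{p_{m}}$ is produced by exactly one admissible multi-index, namely $p_{1}=\dots=p_{\ell-1}=0$ and $p_{\ell}=\dots=p_{m-1}=p_{m}$ (for $\ell=m$ this degenerates to the all-zero index, giving coefficient $1$), and this choice respects the constraint $0\le p_{1}\le\dots\le p_{m-1}\le p_{m}$. Writing $c_{\ell}>0$ for the corresponding positive coefficient and $c=\min_{1\le\ell\le m}c_{\ell}>0$, nonnegativity of the remaining terms yields the pointwise bound
\[
H_{p_{m}}(w_{1},\dots,w_{m})\ \ge\ c_{\ell}\,w_{\ell}^{p_{m}}\quad(\ell=1,\dots,m),\qquad m\,H_{p_{m}}\ \ge\ c\sum_{\ell=1}^{m}w_{\ell}^{p_{m}}.
\]
Integrating over $\Omega$ and recalling the definition of the norm in (\ref{3.1}) gives $\sum_{\ell=1}^{m}\|w_{\ell}(t,\cdot)\|_{p_{m}}^{p_{m}}\le \frac{m}{c\,|\Omega|}\,L(t)$, and since $L$ is uniformly bounded on $[0,T^{\ast}]$ by Theorem \ref{theorem1}, so is each $\|w_{\ell}(t,\cdot)\|_{p_{m}}$. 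Taking $p=p_{m}\ge1$ then proves $w_{\ell}\in L^{\infty}(0,T^{\ast};L^{p}(\Omega))$ for every $\ell$, which is the assertion.

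I expect the only delicate point to be the combinatorial verification that each pure power $w_{\ell}^{p_{m}}$ actually occurs, and occurs with a \emph{strictly positive} coefficient; once the unique isolating multi-index is identified and checked against the ordering constraint this is immediate. Everything else — the nonnegativity of the terms, the passage from the pointwise inequality to the integral bound, and the invocation of Theorem \ref{theorem1} — is routine.
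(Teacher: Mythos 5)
Your proposal is correct and takes essentially the same route as the paper: the paper's proof simply cites Theorem \ref{theorem1} together with the inequality $\int_{\Omega}\bigl(\sum_{\ell=1}^{m}w_{\ell}(t,x)\bigr)^{p}dx\leq C_{9}L(t)$ on $\left[0,T^{\ast}\right]$ (its (\ref{5.24})), which is exactly the comparison you establish by isolating the pure-power monomials $w_{\ell}^{p_{m}}$ in the positive-coefficient homogeneous polynomial $H_{p_{m}}$ and using the nonnegativity of the $w_{\ell}$. The only difference is one of detail: the paper asserts (\ref{5.24}) without verification, whereas you supply the combinatorial check (correctly, including the degenerate index for $\ell=m$), and your choice $p=p_{m}$ is consistent with the paper's ``some $p\geq1$'' since $p_{m}$ is a positive integer.
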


\begin{proposition}
\label{proposition2}Under the assumptions of theorem \ref{theorem1} and given
that the condition (\ref{1.4}) is satisfied, all solutions of (\ref{5.1}%
)-(\ref{5.2}) with positive initial data in $L^{\infty}\left(  \Omega\right)
$ are global for some $p>\dfrac{Nn}{2}$.
\end{proposition}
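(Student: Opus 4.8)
The plan is to deduce global existence from the classical criterion recalled in Section 2: it is enough to produce, for some exponent $q>n/2$, a bound of the reaction terms in $L^{q}(\Omega)$ that is uniform on $[0,T_{\max})$, since together with the blow-up alternative (\ref{3.3}) this forces $T_{\max}=\infty$. The whole argument then chains three ingredients: the control of the Lyapunov functional $L(t)$ furnished by Theorem \ref{theorem1}, the $L^{p}$ bound on the components extracted from it in Corollary \ref{corollary1}, and the polynomial growth hypothesis (A2), closed by a smoothing step for the diagonalized semigroup.

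First I would fix the degree $p_{m}$ of $H_{p_{m}}$ to be large. The role of $p_{m}$ is only to set the degree of the functional, and the hypotheses (\ref{1.12}) of Theorem \ref{theorem1} can be met for $p_{m}$ as large as we please, since the $p$-dependent powers of the free parameters $\theta_{\ell}$ enter the determinants $K_{l}^{l}$ as overall positive factors while the sign of each bracket, e.g. $\prod_{k}\theta_{k}^{2}-A_{1l}^{2}$, is governed by the $\theta_{\ell}$ alone. I therefore choose $p_{m}>Nn/2$, where $N$ is the polynomial-growth exponent of (A2) and $n$ the space dimension. With this choice Theorem \ref{theorem1} keeps $L(t)$ uniformly bounded on every $[0,T^{\ast}]$ with $T^{\ast}<T_{\max}$, and since for positive arguments $H_{p_{m}}$ dominates each monomial $w_{\ell}^{p_{m}}$ up to a positive constant, Corollary \ref{corollary1} yields a constant $C$ with
\[
\sup_{0\le t<T_{\max}}\,\|w_{\ell}(t,\cdot)\|_{p_{m}}\le C,\qquad \ell=1,\dots,m .
\]

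Next I would transfer this bound to the reaction terms and close the bootstrap. By (A2), $|F_{\ell}(W)|\le C_{1}\bigl(1+\sum_{k}w_{k}\bigr)^{N}$, so with $q:=p_{m}/N$, integration gives
\[
\|F_{\ell}(W(t,\cdot))\|_{q}\le C_{1}\bigl\|1+\textstyle\sum_{k}w_{k}(t,\cdot)\bigr\|_{p_{m}}^{N}\le C',
\]
uniformly in $t$, and $q=p_{m}/N>n/2$ exactly because $p_{m}>Nn/2$. Writing (\ref{5.1})--(\ref{5.2}) in Duhamel form with the analytic semigroup generated by the operator $O$ of (\ref{3.2}), the $L^{q}$--$L^{\infty}$ smoothing estimates of the heat semigroup, applied to this uniform $L^{q}$ bound on $F_{\ell}$ and to the time-independent boundary data, upgrade the estimate to $\sup_{t<T_{\max}}\|w_{\ell}(t,\cdot)\|_{\infty}<\infty$. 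By the blow-up alternative (\ref{3.3}) this is incompatible with $T_{\max}<\infty$, whence $T_{\max}=\infty$ and the solution is global.

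The step I expect to be the main obstacle is the last one: carrying the nonhomogeneous Robin and Dirichlet data through the variation-of-constants formula, and confirming that the $L^{q}$--$L^{\infty}$ regularization genuinely applies to the coupled nonhomogeneous problem when $q$ exceeds $n/2$ only slightly, the borderline $q=n/2$ being excluded precisely by the strict inequality $p_{m}>Nn/2$. A secondary point demanding care is that all constants be independent of $T^{\ast}<T_{\max}$, which rests on the bound of Theorem \ref{theorem1} being uniform in $T^{\ast}$.
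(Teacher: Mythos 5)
Your proof follows essentially the same route as the paper: the uniform $L^{p}$ bound on the components from Corollary \ref{corollary1}, the polynomial growth condition (A2) to bound the reaction terms $F_{\ell}$ in $L^{q}$ with $q=p/N>n/2$, and the classical criterion recalled in Section 2 (Henry) to conclude global existence. The only differences are that you unpack that cited criterion via the Duhamel formula and $L^{q}$--$L^{\infty}$ smoothing, and you explicitly justify that the degree $p_{m}$ can be taken larger than $Nn/2$ --- a point the paper leaves implicit --- neither of which changes the underlying argument.
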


\section{\textbf{Proofs}}

For the proof of theorem \ref{theorem1}, we first need to define some
preparatory Lemmas.

\begin{lemma}
\label{Lemma1}With\ $H_{p_{m}}$ being the homogeneous polynomial defined by
(\ref{5.3}), differentiating in $w_{1}$ yields%
\begin{align}
\partial_{w_{1}}H_{p_{m}}  &  =p_{m}\overset{p_{m}-1}{\underset{p_{m-1}%
=0}{\sum}}..\overset{p_{2}}{\underset{p_{1}=0}{\sum}}C_{p_{m}-1}^{p_{m-1}%
}...C_{p_{2}}^{p_{1}}\theta_{1}^{\left(  p_{1}+1\right)  ^{2}}...\theta
_{\left(  m-1\right)  }^{\left(  p_{\left(  m-1\right)  }+1\right)  ^{2}%
}\times\nonumber\\
&  w_{1}^{p_{1}}w_{2}^{p_{2}-p_{1}}w_{3}^{p_{3}-p_{2}}...w_{m}^{\left(
p_{m}-1\right)  -p_{m-1}}. \label{5.4}%
\end{align}
\newline Similarly for $\ell=2,...,m-1$, we have%
\begin{align}
\partial_{w_{\ell}}H_{p_{m}}  &  =p_{m}\overset{p_{m}-1}{\underset{p_{m-1}%
=0}{\sum}}...\overset{p_{2}}{\underset{p_{1}=0}{\sum}}C_{p_{m}-1}^{p_{m-1}%
}...C_{p_{2}}^{p_{1}}\theta_{1}^{p_{1}^{2}}...\theta_{\ell-1}^{p_{\left(
\ell-1\right)  }^{2}}\theta_{\ell}^{\left(  p_{\ell}+1\right)  2}%
...\theta_{\left(  m-1\right)  }^{\left(  p_{\left(  m-1\right)  }+1\right)
^{2}}\times\nonumber\\
&  w_{1}^{p_{1}}w_{2}^{p_{2}-p_{1}}w_{3}^{p_{3}-p_{2}}...w_{m}^{\left(
p_{m}-1\right)  -p_{m-1}}. \label{5.5}%
\end{align}
Finally, differentiating in $w_{m}$ yields%
\begin{align}
\partial_{w_{m}}H_{p_{m}}  &  =p_{m}\overset{p_{m}-1}{\underset{p_{m-1}%
=0}{\sum}}...\overset{p_{2}}{\underset{p_{1}=0}{\sum}}C_{p_{m}-1}^{p_{m-1}%
}...C_{p_{3}}^{p_{2}}C_{p_{2}}^{p_{1}}\theta_{1}^{p_{1}^{2}}\theta_{2}%
^{p_{2}^{2}}...\theta_{\left(  m-1\right)  }^{p_{\left(  m-1\right)  }^{2}%
}\times\nonumber\\
&  w_{1}^{p_{1}}w_{2}^{p_{2}-p_{1}}w_{3}^{p_{3}-p_{2}}...w_{m}^{\left(
p_{m}-1\right)  -p_{m-1}}. \label{5.6}%
\end{align}

\end{lemma}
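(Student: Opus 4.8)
The plan is to establish the three formulas (\ref{5.4})--(\ref{5.6}) by a single mechanism applied three times: differentiate $H_{p_{m}}$ termwise inside the nested sum, absorb the numerical factor created by the differentiation into the binomial coefficients, and then relabel the summation indices so that the expression is returned to the canonical shape of $H_{p_{m}}$ with the top index lowered by one. The only tools needed are the two elementary identities
\begin{equation}
j\,C_{n}^{j}=n\,C_{n-1}^{j-1}\qquad\text{and}\qquad(n-j)\,C_{n}^{j}=n\,C_{n-1}^{j}, \label{eq:binid}
\end{equation}
both immediate from the definition $C_{n}^{j}=\tfrac{n!}{j!(n-j)!}$, together with the standing convention that a binomial coefficient with a negative lower index is zero (this convention will automatically account for the boundary terms annihilated by the differentiation).

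For (\ref{5.4}) I would differentiate the single factor $w_{1}^{p_{1}}$; this kills the $p_{1}=0$ terms and brings down a factor $p_{1}$. Applying the first identity in (\ref{eq:binid}) in the form $p_{1}C_{p_{2}}^{p_{1}}=p_{2}C_{p_{2}-1}^{p_{1}-1}$ trades the factor $p_{1}$ for $p_{2}$ while lowering the first binomial; the new factor $p_{2}$ is then absorbed into $C_{p_{3}}^{p_{2}}$ in exactly the same way, and this cascade runs up the chain until the final step $p_{m-1}C_{p_{m}}^{p_{m-1}}=p_{m}C_{p_{m}-1}^{p_{m-1}-1}$ extracts the overall constant $p_{m}$ and leaves every $C_{p_{k+1}}^{p_{k}}$ replaced by $C_{p_{k+1}-1}^{p_{k}-1}$. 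Introducing the new summation variables $q_{k}=p_{k}-1$ for all $k=1,\dots,m-1$ and renaming them back to $p_{k}$ then restores the standard binomials (the topmost one becoming $C_{p_{m}-1}^{p_{m-1}}$), shifts each $\theta_{k}$-exponent from $p_{k}^{2}$ to $(p_{k}+1)^{2}$, and turns the monomial into $w_{1}^{p_{1}}w_{2}^{p_{2}-p_{1}}\cdots w_{m}^{(p_{m}-1)-p_{m-1}}$, which is precisely (\ref{5.4}).

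For $2\le\ell\le m-1$ the differentiation acts on $w_{\ell}^{p_{\ell}-p_{\ell-1}}$, producing the factor $(p_{\ell}-p_{\ell-1})$ and killing the terms with $p_{\ell}=p_{\ell-1}$. Here I would first use the \emph{second} identity in (\ref{eq:binid}), namely $(p_{\ell}-p_{\ell-1})C_{p_{\ell}}^{p_{\ell-1}}=p_{\ell}C_{p_{\ell}-1}^{p_{\ell-1}}$, to absorb the factor into $C_{p_{\ell}}^{p_{\ell-1}}$, and then cascade the emergent $p_{\ell}$ upward through $C_{p_{\ell+1}}^{p_{\ell}},\dots,C_{p_{m}}^{p_{m-1}}$ with the first identity exactly as before, again extracting $p_{m}$. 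This time only the indices $p_{\ell},\dots,p_{m-1}$ must be shifted, so only the binomials from level $\ell$ up and the exponents of $\theta_{\ell},\dots,\theta_{m-1}$ are altered, while $\theta_{1},\dots,\theta_{\ell-1}$ keep the exponents $p_{1}^{2},\dots,p_{\ell-1}^{2}$; this gives (\ref{5.5}). The case $\ell=m$ is the simplest: differentiating $w_{m}^{p_{m}-p_{m-1}}$ produces $(p_{m}-p_{m-1})$, and a single application of $(p_{m}-p_{m-1})C_{p_{m}}^{p_{m-1}}=p_{m}C_{p_{m}-1}^{p_{m-1}}$ pulls out $p_{m}$ and lowers the top binomial with no cascade and no reindexing at all, the upper limit on $p_{m-1}$ dropping from $p_{m}$ to $p_{m}-1$ because the $p_{m-1}=p_{m}$ term has vanished; this is (\ref{5.6}).

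Each individual algebraic step is routine; the point that requires genuine care, and which I expect to be the main obstacle, is the bookkeeping of the cascade together with the relabelling of the nested summation ranges. Concretely, one must verify that after the substitution $q_{k}=p_{k}-1$ all lower limits return to $0$, that the nesting $0\le q_{k}\le q_{k+1}$ is preserved (with top limit $q_{m-1}\le p_{m}-1$), and that the boundary terms removed by the differentiation coincide exactly with the terms discarded by the convention on binomials with negative lower index. Once this index accounting is checked carefully for one of the three cases, the other two follow verbatim, the only changes being which binomial the differentiation factor is first absorbed into and the range over which the shift $q_{k}=p_{k}-1$ is applied.
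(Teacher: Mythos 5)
Your proposal is correct, and every step of it checks out: the two identities $j\,C_{n}^{j}=n\,C_{n-1}^{j-1}$ and $(n-j)\,C_{n}^{j}=n\,C_{n-1}^{j}$, the cascade that pushes the factor created by differentiation up through the chain of binomial coefficients until it exits as the overall constant $p_{m}$, and the shift $q_{k}=p_{k}-1$ (applied to all indices for the derivative in $w_{1}$, to the indices $k\geq\ell$ for $2\leq\ell\leq m-1$, and to no indices at all for the derivative in $w_{m}$) reproduce exactly (\ref{5.4}), (\ref{5.5}) and (\ref{5.6}), including the lowered top limit $p_{m-1}\leq p_{m}-1$ and the exponent pattern in which $\theta_{1},\dots,\theta_{\ell-1}$ keep $p_{k}^{2}$ while $\theta_{\ell},\dots,\theta_{m-1}$ acquire $(p_{k}+1)^{2}$. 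The index bookkeeping you flag as the delicate point is indeed sound: when $p_{1}\geq1$ (resp. $p_{\ell}>p_{\ell-1}$) all shifted indices stay nonnegative and nested, and the terms annihilated by differentiation are precisely the boundary terms lost in the relabelling.

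There is, however, nothing in the paper to compare your argument against: the paper states Lemma \ref{Lemma1} (and likewise Lemma \ref{Lemma2}) with no proof whatsoever, treating the differentiation of $H_{p_{m}}$ as a bare computational fact in the style of the earlier Abdelmalek--Kouachi work it builds on. So your proof is not an alternative route but the missing verification, and it is the natural one. As a small dividend, your derivation confirms that the exponent written as $\left(p_{\ell}+1\right)2$ on $\theta_{\ell}$ in (\ref{5.5}) is a typographical error for $\left(p_{\ell}+1\right)^{2}$, since that is what the substitution $q_{\ell}=p_{\ell}-1$ forces.
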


\begin{lemma}
\label{Lemma2}The second partial derivative of $H_{p_{m}}$ in $w_{1}$\ is
given by%
\begin{align}
\partial_{w_{1}^{2}}H_{n}  &  =p_{m}\left(  p_{m}-1\right)  \overset{p_{m}%
-2}{\underset{p_{m-1}=0}{\sum}}...\overset{p_{3}}{\underset{p_{2}=0}{\sum}%
}\overset{p_{2}}{\underset{p_{1}=0}{\sum}}C_{p_{m}-2}^{p_{m-1}}...C_{p_{2}%
}^{p_{1}}\times\nonumber\\
&  \theta_{1}^{\left(  p_{1}+2\right)  ^{2}}...\theta_{\left(  m-1\right)
}^{\left(  p_{\left(  m-1\right)  }+2\right)  ^{2}}w_{1}^{p_{1}}w_{2}%
^{p_{2}-p_{1}}...w_{m}^{\left(  p_{m}-2\right)  -p_{m-1}}. \label{5.7}%
\end{align}
Similarly, we obtain%
\begin{align}
\partial_{w_{\ell}^{2}}H_{n}  &  =p_{m}\left(  p_{m}-1\right)  \overset
{p_{m}-2}{\underset{p_{m-1}=0}{\sum}}...\overset{p_{2}}{\underset{p_{1}%
=0}{\sum}}C_{p_{m}-2}^{p_{m-1}}...C_{p_{2}}^{p_{1}}\times\nonumber\\
&  \theta_{1}^{p_{1}^{2}}\theta_{2}^{p_{2}^{2}}...\theta_{\ell-1}^{p_{\ell
-1}^{2}}\theta_{\ell}^{\left(  p_{\ell}+2\right)  ^{2}}...\theta_{\left(
m-1\right)  }^{\left(  p_{\left(  m-1\right)  }+2\right)  ^{2}}\times
\nonumber\\
&  w_{1}^{p_{1}}w_{2}^{p_{2}-p_{1}}...w_{m}^{\left(  p_{m}-2\right)  -p_{m-1}%
}. \label{5.8}%
\end{align}
for all\ $\ell=2,...,m-1$,
\begin{align}
\partial_{w_{\ell}w_{\kappa}}H_{n}  &  =p_{m}\left(  p_{m}-1\right)
\overset{p_{m}-2}{\underset{p_{m-1}=0}{\sum}}...\overset{p_{2}}{\underset
{p_{1}=0}{\sum}}C_{p_{m}-2}^{p_{m-1}}...C_{p_{2}}^{p_{1}}\times\nonumber\\
&  \theta_{1}^{p_{1}^{2}}...\theta_{\ell-1}^{p_{\ell-1}^{2}}\theta_{\ell
}^{\left(  p_{\ell}+1\right)  ^{2}}...\theta_{\kappa-1}^{\left(  p_{\kappa
-1}+1\right)  ^{2}}\theta_{\kappa}^{\left(  p_{\kappa}+2\right)  ^{2}%
}...\theta_{\left(  m-1\right)  }^{\left(  p_{\left(  m-1\right)  }+2\right)
^{2}}\times\nonumber\\
&  w_{1}^{p_{1}}w_{2}^{p_{2}-p_{1}}...w_{m}^{\left(  p_{m}-2\right)  -p_{m-1}}
\label{5.9}%
\end{align}
for all\ $1\leq\ell<\kappa\leq m$. Finally, the second derivative in $w_{m}$
is given by%
\begin{align}
\partial_{w_{m}^{2}}H_{n}  &  =p_{m}\left(  p_{m}-1\right)  \overset{p_{m}%
-2}{\underset{p_{m-1}=0}{\sum}}...\overset{p_{2}}{\underset{p_{1}=0}{\sum}%
}C_{p_{m}-2}^{p_{m-1}}...C_{p_{2}}^{p_{1}}\theta_{1}^{p_{1}^{2}}%
...\theta_{\left(  m-1\right)  }^{p_{\left(  m-1\right)  }^{2}}\times
\nonumber\\
&  w_{1}^{p_{1}}w_{2}^{p_{2}-p_{1}}...w_{m}^{\left(  p_{m}-2\right)  -p_{m-1}%
}. \label{5.10}%
\end{align}

\end{lemma}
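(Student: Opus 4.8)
The plan is to derive all of (\ref{5.7})--(\ref{5.10}) by applying the first-derivative computation of Lemma \ref{Lemma1} a second time. The key observation is that each of the right-hand sides (\ref{5.4})--(\ref{5.6}) is itself of exactly the same shape as $H_{p_m}$: up to the extracted factor $p_m$, the expression $\partial_{w_\ell}H_{p_m}$ is a copy of the homogeneous polynomial $H_{p_m-1}$ in which the top summation bound and top binomial superscript have been lowered by one unit and the exponents of the $\theta_j$ have been raised in a prescribed range. Consequently the differentiation rule established in Lemma \ref{Lemma1} applies verbatim to these expressions, and a second application reproduces the stated formulas.

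Concretely, I read Lemma \ref{Lemma1} as saying that $\partial_{w_\ell}$ acts on a summand of the canonical form by (a) pulling out the leading multiplicity factor, (b) lowering the top bound $p_m$ and the top binomial $C_{p_m}^{p_{m-1}}$ by one unit, (c) increasing by one the shift parameter $s$ in the exponent $(p_j+s)^2$ of each $\theta_j$ with $j\ge\ell$ (so $p_j^2$ first becomes $(p_j+1)^2$) while leaving the indices $j<\ell$ untouched, and (d) decreasing the $w_m$-power by one. For the pure second derivatives I would then iterate this rule. Starting from (\ref{5.4}) and differentiating in $w_1$ once more produces the second factor $p_m-1$ (hence the product $p_m(p_m-1)$), lowers the top binomial to $C_{p_m-2}^{p_{m-1}}$, raises every $\theta_j$-exponent a second time so that $(p_j+1)^2$ becomes $(p_j+2)^2$, and drops the $w_m$-power to $(p_m-2)-p_{m-1}$; this is exactly (\ref{5.7}). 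The formulas (\ref{5.8}) and (\ref{5.10}) follow identically, the only difference being that for $\partial_{w_\ell^2}H_{p_m}$ the doubled shift affects only the indices $j\ge\ell$, and for $\partial_{w_m^2}H_{p_m}$ no $\theta_j$-exponent is shifted at all, in accordance with (\ref{5.6}).

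For the mixed derivative $\partial_{w_\ell w_\kappa}H_{p_m}$ with $\ell<\kappa$ I would begin from the first-derivative formula (\ref{5.5}) in $w_\ell$, in which the exponents of $\theta_\ell,\ldots,\theta_{m-1}$ are already raised by one while $\theta_1,\ldots,\theta_{\ell-1}$ retain exponent $p_j^2$. Differentiating this in $w_\kappa$ raises, by part (c) of the rule, only the exponents with index $j\ge\kappa$ a second time. Hence $\theta_1,\ldots,\theta_{\ell-1}$ keep exponent $p_j^2$, the block $\theta_\ell,\ldots,\theta_{\kappa-1}$ sits at $(p_j+1)^2$, and $\theta_\kappa,\ldots,\theta_{m-1}$ reaches $(p_j+2)^2$; together with the factor $p_m(p_m-1)$, the binomial $C_{p_m-2}^{p_{m-1}}$, and the $w_m$-power $(p_m-2)-p_{m-1}$, this is precisely (\ref{5.9}).

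The step requiring the most care, and the one I regard as the main obstacle, is verifying that the second differentiation legitimately applies to the already-shifted expressions, i.e. that the self-similarity is genuine. This amounts to checking that the reindexing which, in Lemma \ref{Lemma1}, converts $\partial_{w_\ell}H_{p_m}$ into the canonical $\theta$-shifted copy of $H_{p_m-1}$ does not interfere with a subsequent differentiation, so that the two $\theta$-exponent shifts compound additively (yielding $(p_j+2)^2$ rather than some mixed value) and the two extracted factors multiply to $p_m(p_m-1)$. I would confirm this once for a single representative index — tracking how the shift $p_\ell\mapsto p_\ell+1$ interacts with the binomial coefficients and the monomial powers under one reindexing — after which all of (\ref{5.7})--(\ref{5.10}) follow by the identical bookkeeping, with only the range of shifted $\theta_j$-indices changing from case to case.
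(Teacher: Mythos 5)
Your proposal is correct, and there is in fact nothing in the paper to compare it against: Lemma \ref{Lemma2}, like Lemma \ref{Lemma1}, is stated as a bare computational fact with no proof given (the paper treats both as routine differentiations of the polynomial $H_{p_{m}}$ in (\ref{5.3})). Your iterate-the-first-derivative scheme is a valid and arguably the cleanest way to organize that computation, so it stands as a genuine supplement rather than a variant of an existing argument. The self-similarity you flag as the main obstacle is genuine and checks out exactly as you predict. The computation behind Lemma \ref{Lemma1} consists of three ingredients: (i) the observation that the surviving terms after $\partial_{w_{\ell}}$ are those with $p_{\ell}>p_{\ell-1}$, which along the chain $p_{1}\leq p_{2}\leq\cdots\leq p_{m}$ makes the reindexing $p_{j}\mapsto p_{j}+1$ for $j\geq\ell$ legitimate; (ii) the cascade of binomial identities
\begin{equation*}
\left(  p_{\ell}-p_{\ell-1}\right)  C_{p_{\ell}}^{p_{\ell-1}}=p_{\ell
}C_{p_{\ell}-1}^{p_{\ell-1}},\qquad p_{j}C_{p_{j+1}}^{p_{j}}=p_{j+1}
C_{p_{j+1}-1}^{p_{j}-1},
\end{equation*}
which propagates the multiplicity up the nest and extracts the \emph{current} top degree as the leading factor; and (iii) the effect of the reindexing on the $\theta$-weights. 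Since the $\theta_{j}$-exponents play no role whatsoever in (i) and (ii) --- they are carried passively through the binomial cascade --- the rule you state in part (c) holds for an arbitrary shift: reindexing turns $(p_{j}+s)^{2}$ into $(p_{j}+s+1)^{2}$ for $j\geq\ell$ and leaves $j<\ell$ untouched. Hence the two shifts compound additively, the two extracted factors are the successive top degrees $p_{m}$ and $p_{m}-1$, and all four formulas (\ref{5.7})--(\ref{5.10}) follow, including the three-block exponent pattern $p_{j}^{2}$, $\left(  p_{j}+1\right)  ^{2}$, $\left(  p_{j}+2\right)  ^{2}$ in the mixed case (\ref{5.9}). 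One small point worth making explicit in a write-up: when $\kappa=m$ in (\ref{5.9}), the second differentiation shifts no $\theta$-exponent at all (there is no $\theta_{m}$; differentiation in $w_{m}$ only lowers the top degree and binomial, as in (\ref{5.6})), which is consistent with the block $\theta_{\kappa}\cdots\theta_{m-1}$ being empty in the stated formula.
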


\begin{lemma}
[see Abdelmalek and Kouachi \cite{Abdelmalek2}]\label{Lemma3}Let $A$ be
the\ $m$-square symetric matrix defined by $A=\left(  a_{\ell\kappa}\right)
_{1\leq\ell,\kappa\leq m}$ then we the following property arises:%
\begin{equation}
\left\{
\begin{array}
[c]{l}%
K_{m}^{m}=\det\left[  m\right]  \times\overset{k=m-2}{\underset{k=1}{\Pi}%
}\left(  \det\left[  k\right]  \right)  ^{2^{\left(  m-k-2\right)  }},\text{
\ \ }m>2\\
K_{2}^{2}=\det\left[  2\right]
\end{array}
\right.  \label{5.11}%
\end{equation}
where%
\begin{align*}
K_{m}^{l}  &  =K_{l-1}^{l-1}\times K_{m}^{l-1}-\left(  H_{m}^{l-1}\right)
^{2};l=3,...,m,\\
H_{m}^{l}  &  =\underset{1\leq\ell,\kappa\leq m}{\det}\left(  \left(
a_{\ell,\kappa}\right)  _{\substack{\ell\neq m,...l+1\\\kappa\neq
m-1,..l}}\right)  \times\overset{k=l-2}{\underset{k=1}{\Pi}}\left(
\det\left[  k\right]  \right)  ^{2^{\left(  l-k-2\right)  }}l=3,...,m-1,\\
K_{m}^{2}  &  =a_{11}a_{mm}-\left(  a_{1m}\right)  ^{2},\\
H_{m}^{2}  &  =a_{11}a_{2m}-a_{12}a_{1m}.
\end{align*}

\end{lemma}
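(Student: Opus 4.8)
The plan is to prove a statement that is stronger than the displayed formula — a closed form for \emph{every} intermediate quantity $K_m^l$ — and then read off the case $l=m$. Writing $A[S]$ for the principal submatrix of $A$ on an index set $S\subseteq\{1,\dots,m\}$ (so that $\det[k]=\det A[\{1,\dots,k\}]$), I claim that for every $l$ with $2\le l\le m$,
\begin{equation}
K_m^l=\det A[\{1,\dots,l-1,m\}]\;\prod_{k=1}^{l-2}\left(\det[k]\right)^{2^{(l-k-2)}}. \tag{$\star$}
\end{equation}
The endpoint $l=m$ gives $A[\{1,\dots,m-1,m\}]=A$ and reproduces exactly the asserted formula for $K_m^m$, while the base case $l=2$ is immediate: the empty product equals $1$ and $K_m^2=a_{11}a_{mm}-a_{1m}^{2}=\det A[\{1,m\}]$. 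The auxiliary quantity is already of this shape, since the definition of $H_m^l$ says precisely that $H_m^l=Q_l\prod_{k=1}^{l-2}(\det[k])^{2^{(l-k-2)}}$, where $Q_l$ denotes the determinant of the submatrix of $A$ with rows $\{1,\dots,l\}$ and columns $\{1,\dots,l-1,m\}$.

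I would prove $(\star)$ by induction on the superscript $l$, treating the subscript $m$ as a free parameter $\ge l$. In the inductive step I substitute into the recursion $K_m^l=K_{l-1}^{l-1}K_m^{l-1}-(H_m^{l-1})^{2}$ the three inductive expressions: $K_{l-1}^{l-1}$ (this is $(\star)$ at superscript $l-1$ with subscript $l-1$, so $\{1,\dots,l-2,l-1\}=\{1,\dots,l-1\}$ and only the top-left block enters, giving the factor $\det[l-1]$), $K_m^{l-1}$ (this is $(\star)$ at superscript $l-1$ with the same $m$, giving $\det A[\{1,\dots,l-2,m\}]$), and $H_m^{l-1}=Q_{l-1}\prod_{k=1}^{l-3}(\det[k])^{2^{(l-k-3)}}$. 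A short check on the exponents — each $\det[k]$ with $k\le l-3$ enters $K_{l-1}^{l-1}K_m^{l-1}$ with total exponent $2^{(l-k-3)}+2^{(l-k-3)}=2^{(l-k-2)}$ and enters $(H_m^{l-1})^{2}$ with exponent $2\cdot 2^{(l-k-3)}=2^{(l-k-2)}$ — shows that the common factor $\prod_{k=1}^{l-3}(\det[k])^{2^{(l-k-2)}}$ pulls out of both terms, leaving
\begin{equation}
K_m^l=\left[\prod_{k=1}^{l-3}(\det[k])^{2^{(l-k-2)}}\right]\Big(\det[l-1]\cdot\det A[\{1,\dots,l-2,m\}]-Q_{l-1}^{\,2}\Big). \tag{$\dagger$}
\end{equation}

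Everything then reduces to the purely algebraic identity
\begin{equation}
\det[l-1]\cdot\det A[\{1,\dots,l-2,m\}]-Q_{l-1}^{\,2}=\det[l-2]\cdot\det A[\{1,\dots,l-1,m\}], \tag{$\ddagger$}
\end{equation}
which I recognize as the Desnanot--Jacobi (Dodgson condensation / Sylvester) determinant identity applied to the $l\times l$ matrix $M:=A[\{1,\dots,l-1,m\}]$, deleting its last two rows and its last two columns. Indeed, in $M$ the doubly-deleted minor is $\det[l-2]$, the two singly-deleted principal minors are $\det[l-1]$ and $\det A[\{1,\dots,l-2,m\}]$, and the two mixed minors (delete row $l-1$/column $l$, and delete row $l$/column $l-1$) are transposes of one another; this is the one place where the symmetry $A=A^{t}$ is essential, forcing both mixed minors to equal $Q_{l-1}$ and hence producing the square $Q_{l-1}^{\,2}$ with the correct sign. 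Substituting $(\ddagger)$ into $(\dagger)$ and absorbing the new factor $\det[l-2]$ (which carries exponent $2^{(l-(l-2)-2)}=2^{0}=1$) into the product yields exactly $(\star)$, closing the induction. The main obstacle is conceptual rather than computational: spotting the correct strengthening $(\star)$ so that the recursion closes, and aligning the three differently-sized inductive instances so that $(\ddagger)$ appears in precisely the Desnanot--Jacobi form. Once that bookkeeping is arranged, the exponent arithmetic and the classical determinant identity finish the argument.
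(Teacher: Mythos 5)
Your proof is correct, but note that there is no in-paper proof to compare it against: the paper states this lemma with a pointer to Abdelmalek and Kouachi \cite{Abdelmalek2}, where it arises as a symmetric-matrix form of Sylvester's determinant identity. Judged on its own terms, your argument is sound and self-contained, and it amounts to a direct reproof of the cited result rather than an application of it. The two genuine insights are exactly the ones you flag: first, the strengthened invariant $(\star)$ with the subscript kept as a free parameter, so the inductive step can consume both $K_{l-1}^{l-1}$ and $K_{m}^{l-1}$ at superscript $l-1$; second, the recognition that after the exponent bookkeeping --- $2^{(l-k-3)}+2^{(l-k-3)}=2^{(l-k-2)}$ from the product $K_{l-1}^{l-1}K_m^{l-1}$ matching $2\cdot 2^{(l-k-3)}$ from $(H_m^{l-1})^{2}$, with the freshly created $\det[l-2]$ entering at exponent $2^{0}=1$ --- the residue $(\ddagger)$ is precisely the Desnanot--Jacobi identity for $M=A[\{1,\dots,l-1,m\}]$ with its last two rows and columns deleted, where the symmetry $A=A^{t}$ forces the two mixed minors $\det M^{l-1}_{l}$ and $\det M^{l}_{l-1}$ to be transposed copies of $Q_{l-1}$, producing $-Q_{l-1}^{2}$ with the correct sign. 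Two points deserve explicit mention in a write-up: Desnanot--Jacobi is a polynomial identity, so no nonvanishing hypotheses are needed (you invoke the identity, not the condensation algorithm, which would require nonzero interior minors); and your base case also silently uses symmetry, since $a_{11}a_{mm}-a_{1m}^{2}=\det A[\{1,m\}]$ only because $a_{m1}=a_{1m}$. Compared with deferring to Sylvester's identity wholesale, your route buys self-containedness and a closed form for \emph{every} intermediate $K_m^l$, which incidentally clarifies why, in the proof of Theorem \ref{theorem1}, $\operatorname{sign}(K_l^l)$ can be tied to $\operatorname{sign}(\det[l])$: the extra factor in $(\star)$ is a product of powers of lower-order leading minors already known positive at that stage of the induction.
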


\begin{proof}
[Proof of Theorem \ref{theorem1}]The aim of this work is to prove that $L(t)$
is uniformly bounded on the interval $\left[  0,T^{\ast}\right]  ,T^{\ast
}<T_{\max}$. Let us start by differentiating $L$ with respect to $t$:%
\begin{align*}
L^{\prime}(t)  &  =\int_{\Omega}\partial_{t}H_{p_{m}}dx\\
&  =\int_{\Omega}\overset{m}{\underset{\ell=1}{\sum}}\partial_{w_{\ell}%
}H_{p_{m}}\frac{\partial w_{\ell}}{\partial t}dx\\
&  =\int_{\Omega}\overset{m}{\underset{\ell=1}{\sum}}\partial_{w_{\ell}%
}H_{p_{m}}\left(  \lambda_{\ell}\Delta w_{\ell}+F_{\ell}\right)  dx\\
&  =\int_{\Omega}\overset{m}{\underset{\ell=1}{\sum}}\bar{\lambda}_{\ell
}\partial_{w_{\ell}}H_{p_{m}}\Delta w_{\ell}dx+\int_{\Omega}\overset
{m}{\underset{\ell=1}{\sum}}\partial_{w_{\ell}}H_{p_{m}}F_{\ell}dx\\
&  =I+J,
\end{align*}

where%
\begin{equation}
I=\int_{\Omega}\overset{m}{\underset{\ell=1}{\sum}}\bar{\lambda}_{\ell
}\partial_{w_{\ell}}H_{p_{m}}\Delta w_{\ell}dx, \label{5.11a}%
\end{equation}
and%
\begin{equation}
J=\int_{\Omega}\overset{m}{\underset{\ell=1}{%
%TCIMACRO{\dsum }%
%BeginExpansion
{\displaystyle\sum}
%EndExpansion
}}\partial_{w_{\ell}}H_{p_{m}}F_{\ell}dx. \label{5.12}%
\end{equation}

Using Green's formula, we can divide $I$ into two parts $I_{1}$ and $I_{2}$
where%
\begin{equation}
I_{1}=\int_{\partial\Omega}\overset{m}{\underset{\ell=1}{\sum}}\bar{\lambda
}_{\ell}\partial_{w_{\ell}}H_{p_{m}}\partial_{\eta}w_{\ell}dx, \label{5.13}%
\end{equation}
and
\begin{equation}
I_{2}=-\int_{\Omega}\left[  \left(  \left(  \frac{\overline{\lambda}_{\ell
}+\overline{\lambda}_{\kappa}}{2}\partial_{w_{\kappa}w_{\ell}}H_{p_{m}%
}\right)  _{1\leq\ell,\kappa\leq m}\right)  T\right]  \cdot Tdx \label{5.14}%
\end{equation}
for $p_{1}=0,...,p_{2},$ $p_{2}=0,...,p_{3}$ $...p_{m-1}=0,...,p_{m}-2$ and
$T=\left(  \nabla w_{1},\nabla w_{2},...,\nabla w_{m}\right)  ^{t}.$ Applying
lemmas \ref{Lemma1} and \ref{Lemma2} yields%
\begin{equation}
\left.
\begin{array}
[c]{l}%
\left(  \frac{\bar{\lambda}_{\ell}+\bar{\lambda}_{\kappa}}{2}\partial
_{w_{\kappa}w_{\ell}}H_{p_{m}}\right)  _{1\leq\ell,\kappa\leq m}=\\
p_{m}\left(  p_{m}-1\right)  \overset{p_{m}-2}{\underset{p_{m-1}=0}{\sum}%
}...\overset{p_{2}}{\underset{p_{1}=0}{\sum}}C_{p_{m}-2}^{p_{m-1}}...C_{p_{2}%
}^{p_{1}}\left(  \left(  a_{\ell\kappa}\right)  _{1\leq\ell,\kappa\leq
m}\right)  w_{1}^{p_{1}}...w_{m}^{\left(  p_{m}-2\right)  -p_{m-1}},
\end{array}
\right.  \label{5.15}%
\end{equation}
where $\left(  a_{\ell\kappa}\right)  _{1\leq\ell,\kappa\leq m}$ is the matrix
defined in formula (\ref{1.13}).

Now the proof of positivity for $I$ simplifies to proving that there exists a
positive constant $C_{4}$ independent of $t\in\left[  0,T_{\max}\right)  $
such that%
\begin{equation}
I_{1}\leq C_{4}\text{ for all }t\in\left[  0,T_{\max}\right)  , \label{5.16}%
\end{equation}
and that%
\begin{equation}
I_{2}\leq0, \label{5.17}%
\end{equation}
for several boundary conditions. First, let us prove the formula in
(\ref{5.16}):

(i) If $\ell=1,...m$ $:0<\alpha_{\ell}<1$ , then using the boundary conditions
(\ref{1.2}) we get
\[
I_{1}=\int_{\partial\Omega}\overset{m}{\underset{\ell=1}{\sum}}\bar{\lambda
}_{\ell}\partial_{w_{\ell}}H_{p_{m}}\left(  \gamma_{\ell}-\sigma_{\ell}%
w_{\ell}\right)  dx,
\]
where $\sigma_{\ell}=\dfrac{\alpha_{\ell}}{1-\alpha_{\ell}}$ and $\gamma
_{\ell}=\dfrac{\beta_{\ell}}{1-\alpha_{\ell}}$, for\ $\ell=1,...m$. Since
$H\left(  W\right)  =\overset{m}{\underset{\ell=1}{\sum}}\lambda_{\ell
}\partial_{w_{\ell}}H_{p_{m}}\left(  \gamma_{\ell}-\sigma_{\ell}w_{\ell
}\right)  =P_{n-1}\left(  W\right)  -Q_{n}\left(  W\right)  $, where $P_{n-1}$
and $Q_{n}$ are polynomials with positive coefficients and respective degrees
$n-1$ and $n$, and since the solution is positive, it follows that
\begin{equation}
\underset{\overset{m}{\underset{\ell=1}{\sum}}\left\vert w_{\ell}\right\vert
\rightarrow+\infty}{\lim\sup}H\left(  W\right)  =-\infty, \label{5.18}%
\end{equation}
which proves that $H$ is uniformly bounded on $%
%TCIMACRO{\U{211d} }%
%BeginExpansion
\mathbb{R}
%EndExpansion
_{+}^{m}$\ and consequently (\ref{5.16}).

(ii)$\;$If for all $\ell=1,...m:\alpha=0$, then $I_{1}=0$ on $\left[
0,T_{\max}\right)  $.

(iii) The case of homogeneous Dirichlet conditions is trivial, since in this
case the positivity of the solution on $\left[  0,T_{\max}\right)
\times\Omega$ implies $\partial_{\eta}w_{\ell}\leq0,\forall\ell=1,...m$ on
$\left[  0,T_{\max}\right)  \times\partial\Omega$. Consequently, one gets
obtains the same result in (\ref{5.16}) with $C_{4}=0$.

Hence, the proof of (\ref{5.16}) is complete. Now, we move to the proof of
(\ref{5.17}):

Recall the matrix $\left(  a_{\ell\kappa}\right)  _{1\leq\ell,\kappa\leq m}$
which was defined in formula (\ref{1.13}). The quadratic forms (with respect
to $\nabla w_{\ell},$ $\ell=1,...,m$) associated with the matrix $\left(
a_{\ell\kappa}\right)  _{1\leq\ell,\kappa\leq m}$, with $p_{1}=0,...,p_{2},$
$p_{2}=0,..,p_{3}$ ... $p_{m-1}=0,...,p_{m}-2$, is positive definite since its
minors $\det\left[  1\right]  $, $\det\left[  2\right]  $,$...$ $\det\left[
m\right]  $ are all positive. Let us examine these minors and prove their
positivity by induction:

The first minor
\[
\det\left[  1\right]  =\lambda_{1}\theta_{1}^{\left(  p_{1}+2\right)  ^{2}%
}\theta_{2}^{\left(  p_{2}+2\right)  ^{2}}..\theta_{\left(  m-1\right)
}^{\left(  p_{\left(  m-1\right)  }+2\right)  ^{2}}>0
\]
is trivial for $p_{1}=0,...,p_{2},$ $p_{2}=0,...,p_{3}$ ... $p_{m-1}%
=0,...,p_{m}-2$.

For the second minor $\det\left[  2\right]  $,\ according to lemma
\ref{Lemma3}, we get:%
\[
\det\left[  2\right]  =K_{2}^{2}=\lambda_{1}\lambda_{2}\theta_{1}^{2\left(
p_{1}+1\right)  ^{2}}\overset{m-1}{\underset{k=2}{\Pi}}\theta_{k}^{2\left(
p_{k}+2\right)  ^{2}}\left[  \theta_{1}^{2}-A_{12}^{2}\right]  .
\]

Using (\ref{1.12})\ for $l=2$\ we get $\det\left[  2\right]  >0.$

Similarly, for the third minor $\det\left[  3\right]  $,\ and again using
lemma \ref{Lemma3}, we have:%
\[
K_{3}^{3}=\det\left[  3\right]  \det\left[  1\right]  .
\]
\newline Since $\det1>0$, we conclude that%
\[
\operatorname*{sign}(K_{3}^{3})=\operatorname*{sign}(\det\left[  3\right]  ).
\]

Again, using (\ref{1.12})\ for $l=3$ yields $\det\left[  3\right]  >0$.

To conclude the proof, let us suppose $\det\left[  k\right]  >0$ for
$k=1,2,...,l-1$ and show that $\det[l]$ is necessarily positive. We have%
\begin{equation}
\det\left[  k\right]  >0,k=1,...,\left(  l-1\right)  \Rightarrow
\overset{k=l-2}{\underset{k=1}{\Pi}}\left(  \det\left[  k\right]  \right)
^{2^{\left(  l-k-2\right)  }}>0. \label{5.20}%
\end{equation}
From lemma \ref{Lemma3}, we obtain $K_{l}^{l}=\det\left[  l\right]
\times\overset{k=l-2}{\underset{k=1}{\Pi}}\left(  \det\left[  k\right]
\right)  ^{2^{\left(  l-k-2\right)  }}$, and from$\ $(\ref{5.20}), we get:
$\operatorname*{sign}(K_{l}^{l})=\operatorname*{sign}\left(  \det\left[
l\right]  \right)  $. Since $K_{l}^{l}>0$ according to (\ref{1.12}) then
$\det\left[  l\right]  >0$ and the proof of (\ref{5.17}) is concluded. It
follows from (\ref{5.16}) and (\ref{5.17}) that $I$ is bounded. Now, let us
divert our attention to proving that $J$ in (\ref{5.12}) is bounded.
Substituting the expressions of the partial derivatives given by \ref{Lemma1}
in the second integral of (\ref{5.12}) yields%
\begin{align*}
J  &  =\int_{\Omega}\left[  p_{m}\overset{p_{m}-1}{\underset{p_{m-1}=0}{\sum}%
}...\overset{p_{2}}{\underset{p_{1}=0}{\sum}}C_{p_{m}-1}^{p_{m-1}}...C_{p_{2}%
}^{p_{1}}w_{1}^{p_{1}}w_{2}^{p_{2}-p_{1}}...w_{m}^{p_{m}-1-p_{m-1}}\right]
\times\\
&  \left(  \overset{m-1}{\underset{\ell=1}{\Pi}}\theta_{\ell}^{\left(
p_{\ell}+1\right)  ^{2}}F_{1}+\overset{m-1}{\underset{\kappa=2}{\sum}}%
\overset{\kappa-1}{\underset{k=1}{\Pi}}\theta_{k}^{p_{k}^{2}}\overset
{m-1}{\underset{\ell=\kappa}{\Pi}}\theta_{\ell}^{\left(  p_{\ell}+1\right)
^{2}}F_{\kappa}+\overset{m-1}{\underset{\ell=1}{\Pi}}\theta_{\ell}^{p_{\ell
}^{2}}F_{m}\right)  dx\\
&  =\int_{\Omega}\left[  p_{m}\overset{p_{m}-1}{\underset{p_{m-1}=0}{\sum}%
}...\overset{p_{2}}{\underset{p_{1}=0}{\sum}}C_{p_{m}-1}^{p_{m-1}}...C_{p_{2}%
}^{p_{1}}w_{1}^{p_{1}}w_{2}^{p_{2}-p_{1}}...w_{m}^{p_{m}-1-p_{m-1}}\right]
\times\\
&  \left(  \frac{\overset{m-1}{\underset{\ell=1}{\Pi}}\theta_{\ell}^{\left(
p_{\ell}+1\right)  ^{2}}}{\overset{m-1}{\underset{\ell=1}{\Pi}}\theta_{\ell
}^{p_{\ell}^{2}}}F_{1}+\overset{m-1}{\underset{\kappa=2}{\sum}}\frac
{\overset{\kappa-1}{\underset{k=1}{\Pi}}\theta_{k}^{p_{k}^{2}}\overset
{m-1}{\underset{\ell=\kappa}{\Pi}}\theta_{\ell}^{\left(  p_{\ell}+1\right)
^{2}}}{\overset{m-1}{\underset{\ell=1}{\Pi}}\theta_{\ell}^{p_{\ell}^{2}}%
}F_{\kappa}+F_{m}\right)  \overset{m-1}{\underset{\ell=1}{\Pi}}\theta_{\ell
}^{p_{\ell}^{2}}dx\\
&  =\int_{\Omega}\left[  p_{m}\overset{p_{m}-1}{\underset{p_{m-1}=0}{\sum}%
}...\overset{p_{2}}{\underset{p_{1}=0}{\sum}}C_{p_{m}-1}^{p_{m-1}}...C_{p_{2}%
}^{p_{1}}w_{1}^{p_{1}}w_{2}^{p_{2}-p_{1}}...w_{m}^{p_{m}-1-p_{m-1}}\right]
\times\\
&  \left(  \overset{m-1}{\underset{\ell=1}{\Pi}}\frac{\theta_{\ell}^{\left(
p_{\ell}+1\right)  ^{2}}}{\theta_{\ell}^{p_{\ell}^{2}}}F_{1}+\overset
{m-1}{\underset{\kappa=2}{\sum}}\overset{m-1}{\underset{\ell=\kappa}{\Pi}%
}\frac{\theta_{\ell}^{\left(  p_{\ell}+1\right)  ^{2}}}{\theta_{\ell}%
^{p_{\ell}^{2}}}F_{\kappa}+F_{m}\right)  \overset{m-1}{\underset{\ell=1}{\Pi}%
}\theta_{\ell}^{p_{\ell}^{2}}dx.
\end{align*}
Hence, using condition (\ref{1.11}), we deduce that%
\[
J\leq C_{5}\int_{\Omega}\left[  \overset{p_{m}-1}{\underset{p_{m-1}=0}{\sum}%
}...\overset{p_{2}}{\underset{p_{1}=0}{\sum}}C_{p_{2}}^{p_{1}}...C_{p_{m}%
-1}^{p_{m-1}}w_{1}^{p_{1}}w_{2}^{p_{2}-p_{1}}...w_{m}^{p_{m}-1-p_{m-1}}\left(
1+\overset{m}{\underset{\ell=1}{\sum}}w_{\ell}\right)  \right]  dx.
\]
To prove that the functional $L$ is uniformly bounded on the interval $\left[
0,T^{\ast}\right]  $, let us first write
\begin{align*}
&  \overset{p_{m}-1}{\underset{p_{m-1}=0}{\sum}}...\overset{p_{2}}%
{\underset{p_{1}=0}{\sum}}C_{p_{2}}^{p_{1}}...C_{p_{m}-1}^{p_{m-1}}%
w_{1}^{p_{1}}w_{2}^{p_{2}-p_{1}}...w_{m}^{p_{m}-1-p_{m-1}}\left(
1+\overset{m}{\underset{\ell=1}{\sum}}w_{\ell}\right) \\
&  =R_{p_{m}}\left(  W\right)  +S_{p_{m}-1}\left(  W\right)  ,
\end{align*}
where $R_{p_{m}}\left(  W\right)  $ and $S_{p_{m}-1}\left(  W\right)  $are two
homogeneous polynomials of degrees $p_{m}$ and $p_{m}-1$, respectively. Since
all of the polynomials $H_{p_{m}}$ and $R_{p_{m}}$ are of degree $p_{m}$,
there exists a positive constant $C_{6}$ such that
\begin{equation}
\int_{\Omega}R_{p_{m}}\left(  W\right)  dx\leq C_{6}\int_{\Omega}H_{p_{m}%
}\left(  W\right)  dx. \label{5.21}%
\end{equation}
Applying H\"{o}lder's inequality to the integral $\int_{\Omega}S_{p_{m}%
-1}\left(  W\right)  dx,$ one obtains
\[
\int_{\Omega}S_{p_{m}-1}\left(  W\right)  dx\leq\left(  meas\Omega\right)
^{\frac{1}{p_{m}}}\left(  \int_{\Omega}\left(  S_{p_{m}-1}\left(  W\right)
\right)  ^{\frac{p_{m}}{p_{m}-1}}dx\right)  ^{\frac{p_{m}-1}{p_{m}}}.
\]

Since for all $w_{1},w_{2,...},w_{m-1}\geq0$ and $w_{m}>0,$
\[
\dfrac{\left(  S_{p_{m}-1}\left(  W\right)  \right)  ^{\frac{p_{m}}{p_{m}-1}}%
}{H_{p_{m}}\left(  W\right)  }=\dfrac{\left(  S_{p_{m}-1}\left(  x_{1}%
,x_{2},...,x_{m-1},1\right)  \right)  ^{\frac{p_{m}}{p_{m}-1}}}{H_{p_{m}%
}\left(  x_{1},x_{2},...,x_{m-1},1\right)  },
\]
where $\forall\ell\in\left\{  1,2,...,m-1\right\}  :x_{\ell}=\frac{u_{\ell}%
}{u_{\ell+1}}$ and
\[
\underset{x_{\ell}\rightarrow+\infty}{\lim}\dfrac{\left(  S_{p_{m}-1}\left(
x_{1},x_{2},...,x_{m-1},1\right)  \right)  ^{\frac{p_{m}}{p_{m}-1}}}{H_{p_{m}%
}\left(  x_{1},x_{2},...,x_{m-1},1\right)  }<+\infty,
\]
one asserts that there exists a positive constant $C_{7}$ such that
\begin{equation}
\dfrac{\left(  S_{p_{m}-1}\left(  W\right)  \right)  ^{\frac{p_{m}}{p_{m}-1}}%
}{H_{p_{m}}\left(  W\right)  }\leq C_{7},\text{ for all }w_{1},w_{2}%
,...,w_{m}\geq0. \label{5.22}%
\end{equation}

Hence, the functional $L$ satisfies the differential inequality
\[
L^{\prime}\left(  t\right)  \leq C_{6}L\left(  t\right)  +C_{8}L^{\frac
{p_{m}-1}{p_{m}}}\left(  t\right)  ,
\]
which for $Z=L^{\frac{1}{p_{m}}}$ can be written as
\begin{equation}
p_{m}Z^{\prime}\leq C_{6}Z+C_{8}. \label{5.23}%
\end{equation}
A simple integration gives the uniform bound of the functional $L$ on the
interval $\left[  0,T^{\ast}\right]  $. This ends the proof of the theorem.
\end{proof}

\begin{proof}
[Proof of Corollary \ref{corollary1}]The proof of this corollary is an
immediate consequence of \ref{theorem1} and the inequality%
\begin{equation}
\int_{\Omega}\left(  \overset{m}{\underset{\ell=1}{\sum}}w_{\ell}\left(
t,x\right)  \right)  ^{p}dx\leq C_{9}L\left(  t\right)  \text{ on }\left[
0,T^{\ast}\right]  . \label{5.24}%
\end{equation}
for some $p\geq1.$
\end{proof}

\begin{proof}
[Proof of Proposition \ref{proposition2}]From corollary \ref{corollary1},
there exists a positive constant $C_{10}$ such that
\begin{equation}
\int_{\Omega}\left(  \overset{m}{\underset{\ell=1}{\sum}}w_{\ell}\left(
t,x\right)  +1\right)  ^{p}dx\leq C_{10}\text{ on }\left[  0,T_{\max}\right)
. \label{5.25}%
\end{equation}
From (\ref{1.10}),\ we have%
\begin{align}
\forall\ell &  \in\left\{  1,2,...,m\right\}  :\nonumber\\
\left\vert F_{\ell}\left(  W\right)  \right\vert ^{\frac{p}{N}}  &  \leq
C_{11}\left(  W\right)  \left(  \overset{m}{\underset{\ell=1}{\sum}}W_{\ell
}\left(  t,x\right)  \right)  ^{p}\text{ on }\left[  0,T_{\max}\right)
\times\Omega. \label{5.26}%
\end{align}
Since $w_{1},w_{2},...,w_{m}$ are in $L^{\infty}\left(  0,T^{\ast}%
;L^{p}\left(  \Omega\right)  \right)  $ and $\dfrac{p}{N}>\dfrac{n}{2},$ then
as discussed in the preliminary observations section the solution is global.
\end{proof}

\section{Final Remarks}

Recall that the eigenvectors of the diffusion matrix associated with the
eigenvalue $\overline{\lambda}_{\ell}$ is defined as $\overline{v}_{\ell
}=\left(  \overline{v}_{\ell1},\overline{v}_{\ell2},...,\overline{v}_{\ell
m}\right)  ^{t}$. It is important to note that if $\overline{v}_{\ell}$\ is an
eigenvector then so is $(-1)\overline{v}_{\ell}$. In the region considered in
previous sections, we only used the positive $\overline{v}_{\ell}$. The
remainder of the $2^{m}$ regions can be formed using negative versions of the
eigenvectors. In each region, the reaction-diffusion system with a
diagonalized diffusion matrix is formed by multiplying each of the $m$
equations in (\ref{1.1}) by the corresponding element of either $\overline
{v}_{\ell}$ or $(-1)\overline{v}_{\ell}$ and then adding the $m$ equations
together. The equations multiplied by elements of $\overline{v}_{\ell}$ form a
set $L$, whereas the equations multiplied by elements of $(-1)\overline
{v}_{\ell}$ form a set $Z$. Hence, we can write define the region in the form:%
\[
\Sigma_{L,Z}=\left\{  \left(  u_{1}^{0},u_{2}^{0},...,u_{m}^{0}\right)  \in%
%TCIMACRO{\U{211d} }%
%BeginExpansion
\mathbb{R}
%EndExpansion
^{m}\text{ such that }\left\{
\begin{array}
[c]{c}%
w_{\ell}^{0}=\sum_{k=1}^{m}u_{k}^{0}\overline{v}_{\ell k}\geq0,\ell\in L\\
w_{z}^{0}=\left(  -1\right)  \sum_{k=1}^{m}u_{k}^{0}\overline{v}_{zk}%
\geq0,z\in Z
\end{array}
\right.  ,\text{ },\right.
\]
with%
\[
\left\{
\begin{array}
[c]{c}%
\rho_{\ell}^{0}=\sum_{k=1}^{m}\beta_{k}v_{\left(  m+1-\ell\right)  k}%
\geq0,\ell\in L\\
\rho_{\ell}^{0}=\left(  -1\right)  \sum_{k=1}^{m}\beta_{k}v_{\left(
m+1-z\right)  k}\geq0,z\in Z
\end{array}
\right.  .
\]
Using lemma \ref{Lemma0} yields
\[
\Sigma_{L,Z}=\left\{  \left(  u_{1}^{0},u_{2}^{0},...,u_{m}^{0}\right)  \in%
%TCIMACRO{\U{211d} }%
%BeginExpansion
\mathbb{R}
%EndExpansion
^{m}\text{ such that }\left\{
\begin{array}
[c]{c}%
w_{\ell}^{0}=\sum_{k=1}^{m}u_{k}^{0}\sin\frac{\left(  m+1-\ell\right)  k\pi
}{m+1}\geq0,\ell\in L\\
w_{z}^{0}=\left(  -1\right)  \sum_{k=1}^{m}u_{k}^{0}\sin\frac{\left(
m+1-z\right)  k\pi}{m+1}\geq0,z\in Z
\end{array}
\right.  ,\right.
\]
with
\[
\left\{
\begin{array}
[c]{c}%
\rho_{\ell}^{0}=\sum_{k=1}^{m}\beta_{k}\sin\frac{\left(  m+1-\ell\right)
k\pi}{m+1}\geq0,\ell\in L\\
\rho_{z}^{0}=\left(  -1\right)  \sum_{k=1}^{m}\beta_{k}\sin\frac{\left(
m+1-z\right)  k\pi}{m+1}\geq0,z\in Z
\end{array}
\right.  ,
\]
and
\[
\left\{
\begin{array}
[c]{l}%
L\cap Z=\varnothing\\
L\cup Z=\left\{  1,2,...,m\right\}
\end{array}
\right.  .
\]

\end{document}